\newtheorem{remark}{Remark}[section]
\newtheorem{algorithm}{Algorithm}[section]
\title{Structure-preserving parametric finite element method for curve diffusion based on Lagrange multiplier approaches} 
\author{
Harald Garcke\thanks{Fakult\"at f\"ur Mathematik, Universit\"at Regensburg, Regensburg, Germany (\tt{harald.garcke@mathematik.uni-regensburg.de})}
	\and
	Wei Jiang\thanks{School of Mathematics and Statistics, Hubei Key Laboratory of Computational Science, Wuhan University, Wuhan 430072, China (\tt{jiangwei1007@whu.edu.cn}). This author's research was supported by the NSFC 12271414 and 11871384. }		
	\and Chunmei Su\thanks{Yau Mathematical Sciences Center, Tsinghua University, Beijing, 100084, China
		(\tt{sucm@tsinghua.edu.cn}). This author's research was supported by National Key R\&D Program of China (2023YFA1008902) and the National Natural Science Foundation of China No. 12201342.}
	\and Ganghui Zhang\thanks{Yau Mathematical Sciences Center, Tsinghua University, Beijing, 100084, China
		(\tt{gh-zhang19@mails.tsinghua.edu.cn}).}
}
\newcommand{\p}{\partial}
\newcommand{\be}{\begin{equation}}
\newcommand{\ee}{\end{equation}}
\newcommand{\ba}{\begin{array}}
\newcommand{\ea}{\end{array}}
\newcommand{\bea}{\begin{eqnarray}}
\newcommand{\eea}{\end{eqnarray}}
\newcommand{\beas}{\begin{eqnarray*}}
\newcommand{\eeas}{\end{eqnarray*}}
\def\R{{\mathbb R}}
\def\I{{\mathbb I}}
\newcommand{\cE}{\mathcal E}
\renewcommand{\l}{\left}
\renewcommand{\r}{\right}
\renewcommand{\d}{\mathrm{d}}
\newcommand{\bn}{\mathbf{n}}
\newcommand{\bX}{\mathbf{X}}
\newcommand{\bM}{\mathbf{M}}
\newcommand{\bY}{\mathbf{Y}}
\begin{document}
\maketitle
\begin{abstract}
We propose a novel formulation for parametric finite element methods to simulate surface diffusion of closed curves, which is also called as the curve diffusion. Several high-order temporal discretizations are proposed based on this new formulation. To ensure that the numerical methods preserve geometric structures of curve diffusion (i.e., the perimeter-decreasing and area-preserving properties), our formulation incorporates two scalar Lagrange multipliers and two evolution equations involving the perimeter and area, respectively. By discretizing the spatial variable using piecewise linear finite elements  and the temporal variable using either the Crank-Nicolson method or the backward differentiation formulae method, we develop high-order temporal schemes that effectively preserve the structure at a fully discrete level. These new schemes are implicit and can be efficiently solved using Newton's method. Extensive numerical experiments demonstrate that our methods achieve the desired temporal accuracy, as measured by the manifold distance, while simultaneously preserving the geometric structure of the curve diffusion.

\end{abstract}

\begin{keywords}
surface diffusion of curves, parametric finite element method, structure-preserving, Lagrange multiplier, high-order in time.
\end{keywords}


\section{Introduction}

The curve diffusion characterizes the interface dynamics of solid materials whose evolution is governed by mass diffusion within the interface \cite{Mullins,Davi}. This physical phenomenon has garnered significant interest in the fields of differential geometry \cite{Elliott-Garcke,Miura,Fusco} and materials  science \cite{Zhao2021,Wang,Li-Voigt}. It is well-known that curve diffusion exhibits  two key geometric structures  \cite{Bao-Zhao,Garcke2013}, perimeter reduction and area preservation. Therefore, from a numerical standpoint, it is essential to develop efficient methods that maintain these properties at the discrete level.

Numerous numerical methods as well as formulations have been proposed for simulating the curve diffusion or surface diffusion, including level set methods \cite{Chopp}, parametric formulations \cite{Bansch2005}, graph formulations \cite{Bansch2004,Dziuk2002} and formulations based on axisymmetric geometry \cite{DDE2003}. Among these, parametric finite element methods (PFEMs) are  particularly compelling. Building upon Dziuk's PFEM \cite{Dziuk1990} for simulating the mean curvature flow, B\"ansch, Morin and Nochetto \cite{Bansch2005} proposed a PFEM for curve diffusion that requires mesh regularization to avoid mesh distortion. Subsequently, Barrett, Garcke and N\"urnberg \cite{BGN07A} introduced a novel formulation for curve diffusion that incorporates implicit tangential motions to maintain good mesh quality for long-time evolution. They later extended this approach to three-dimensional surface diffusion \cite{BGN08B}. Recently, Hu and Li \cite{Hu-Li} employed the idea of  artificial tangential velocity to design an evolving surface finite element method and proved its convergence for finite elements with order $k\ge 2$. Notably, two recent contributions have focused on simulating the curve diffusion to achieve good mesh quality through harmonic maps \cite{Duan2023} and a specifically chosen tangential motion \cite{Deckelnick2024}. For a comprehensive introduction to PFEMs and other numerical techniques for curve diffusion, we refer to \cite{DDE2005,BGN20}.

Recently, structure-preserving schemes have attracted particular interest in designing efficient and accurate PFEMs. One of the seminal contributions to this field is the study conducted by Jiang and Li \cite{Jiang21}, who proposed the first fully discrete scheme for curve diffusion, demonstrating that it is both perimeter-decreasing and area-preserving. Subsequently, Bao and Zhao \cite{Bao-Zhao} developed a weakly implicit scheme for curve/surface diffusion by approximating the normal vector using the information from adjacent time steps. This methodology has been widely applied across various contexts, including axisymmetric geometric evolution equations \cite{Bao-Garcke-Nurnberg-Zhao}, surface diffusion for curve networks and surface clusters \cite{Bao-Garcke-Nurnberg-Zhao2023}, anisotropic surface diffusion \cite{Bao-Jiang-Li,Bao-Li2023}, and two-phase Navier-Stokes problems \cite{Garcke-Nurnberg-Zhao2023}. However, these schemes are limited to first-order temporal accuracy due to the use of the backward Euler time discretization, and extending them to incorporate high-order temporal discretization methods poses significant challenges
(see recent papers \cite{Jiang23, Jiang24} for further discussion).

Curve diffusion is an asymptotic limit of the  Cahn-Hilliard equation with degenerate mobility \cite{Cahn-Elliott-Cohen,Garcke-Cohen}, facilitating the investigation of moving interface problems via the phase field formulation. Notably, curve diffusion can be described as an $H^{-1}$-gradient flow of the area functional, whereas the Cahn-Hilliard equation represents an $H^{-1}$-gradient flow of the Ginzburg-Landau energy \cite{Garcke2013,Du-Feng,Taylor-Cahn}. In recent decades, considerable progress has been made in numerical methods for phase field models, notably including the invariant energy quadratization (IEQ) method \cite{Yang,Zhao-Wang-Yang}, the scalar auxiliary variable (SAV) method \cite{Shen2018,Shen2019,Huang2020}, the Lagrange multiplier approach \cite{Cheng2020}, and the convex splitting method \cite{Eyre}. These techniques have garnered significant attention due to their ability to facilitate the development of energy-stable schemes that achieve higher temporal accuracy. For a thorough examination of geometric interface evolution using phase field methods, we refer to \cite{Du-Feng}. Among these methods, the Lagrange multiplier approach has proven effective in tackling a variety of nonlinear partial differential equations and demonstrates compatibility with different time discretization strategies, all while maintaining essential physical structures \cite{Cheng2022A,Cheng2023,Badia,Krause-Voigt,Xia,BGN2019,Elliott-Stinner}. Furthermore, recent efforts have sought to adapt widely used temporal discretization methods for phase field models to address curve diffusion within the sharp-interface framework. For instance, Dai {\it et al.} \cite{Dai} utilized the convex splitting method to tackle strongly anisotropic curve diffusion, while Jiang {\it et al.} \cite{Jiang23,Jiang/pre,Jiang24} implemented the predictor-corrector or backward differentiation formulae for time discretization to develop efficient PFEMs that achieve high-order temporal accuracy and maintain good mesh quality.
It is important to note that applying these methods to the sharp interface model is not a trivial or straightforward task, as standard approaches may be ineffective due to mesh distortion issues \cite{Duan2023,Jiang23,Jiang24}.

The main objective of this paper is to develop several PFEMs based on the Lagrange multiplier approach for curve diffusion. These numerical schemes are designed to ensure perimeter reduction and area preservation at the fully discrete level, while simultaneously achieving high-order temporal accuracy and maintaining good mesh quality throughout the evolution. We describe the curve diffusion by a family of simple closed curves $\Gamma(t),\ t\in [0,T]$ which satisfies the following equation
\begin{equation}
	v=\p_{ss}\kappa,
\end{equation}
where $v$ represents the normal velocity, $s$ is the arc-length and $\kappa$ is the curvature. Denote $L(t):=L(\Gamma(t))$ and $A(t):=A(\Gamma(t))$  by the perimeter and area of the curve $\Gamma(t)$, respectively. It can be easily derived that \cite{DDE2005,Garcke2013}
\begin{align}
	&\frac{\d L}{\d t}=\int_{\Gamma(t)}\kappa  v \ \d s=\int_{\Gamma(t)}\kappa\,\p_{ss}\kappa \ \d s=-\int_{\Gamma(t)}|\p_s\kappa|^2\ \d s,\label{Evo of L:model}\\
	&\frac{\d A}{\d t}=\int_{\Gamma(t)}v\ \d s=\int_{\Gamma(t)}\p_{ss}\kappa\ \d s=0,\label{Evo of A:model}
\end{align}
thus the perimeter decreases and the area is conserved.

To introduce our new formulation, we first use the parametrization of curves $\bX(\rho,t):\I\times [0,T]\rightarrow \Gamma(t)$, $\I=\mathbb{R}/\mathbb{Z}$, and apply the formulation proposed by Barrett, Garcke and N\"urnberg \cite{BGN07A} (abbreviated as BGN), which incorporates implicit tangent motion and seeks $\bX$ and $\kappa$ such that
\begin{subequations}\label{BGNmodel}
  \begin{align}
  	\p_{t}\bX\cdot\bn &=\p_{ss}\kappa,\label{BGN1:model}\\
	 \kappa\bn &= -\p_{ss}\bX.\label{BGN2:model}
  \end{align}	
\end{subequations}
To design structure-preserving methods, inspired by \cite{BGN20,Cheng2020}, we introduce two scalar Lagrange multipliers $\lambda(t)$ and $\eta(t)$ to the first equation to enforce that \eqref{Evo of L:model} and \eqref{Evo of A:model} hold. The resulting formulation is given as follows: Find $\bX$, $\kappa$, $\lambda$, and $\eta$ such that
\begin{subequations}\label{Lag}
\begin{align}
	\p_t\bX\cdot \bn
		&=\p_{ss}\kappa+\lambda(t) \kappa+\eta(t)\label{Lag1} ,\\
		\kappa\bn &= -\p_{ss}\bX,\label{Lag2}\\
		\frac{\d L}{\d t}&=-\int_{\Gamma(t)}|\p_s\kappa|^2 \ \d s,\label{Lag3}\\
		\frac{\d A}{\d t}&=0.\label{Lag4}
\end{align}	
\end{subequations}	
Obviously, if $(\bX, \kappa)$ satisfies  \eqref{BGNmodel}, then $(\bX, \kappa, 0, 0)$ is a solution of \eqref{Lag}. Compared to the original BGN formulation \eqref{BGNmodel}, the newly proposed formulation \eqref{Lag} enjoys the following key advantages:
\begin{itemize}
\item [(1)] It can be readily demonstrated that, at the continuous level, the new formulation \eqref{Lag} reduces to the classical BGN formulation \eqref{BGNmodel}.
 Consequently, the discretization methods derived from the new formulation may retain the beneficial mesh quality attributes associated with the original BGN scheme \cite{BGN07A,BGN20};
\item [(2)] In comparison to prior structure-preserving schemes for curve diffusion \cite{Jiang21,Bao-Zhao}, our new formulation readily accommodates several high-order time discretization methods discussed in \cite{Jiang23,Jiang24,Jiang/pre},
including the Crank-Nicolson method and backward differentiation formulae method;
	
\item [(3)] It can be easily proved that the fully discrete schemes derived from the new formulation are both perimeter-decreasing and area-preserving during the evolution.
\end{itemize}

In practical simulations, extensive numerical results have demonstrated that our schemes can be implemented efficiently prior to reaching equilibrium. This implementation results in a reduction of the perimeter while successfully conserving area and achieving the desired temporal accuracy alongside good mesh quality. Once equilibrium is attained, the curve evolution can be continued by removing the Lagrange multiplier which corresponds to the perimeter-decreasing property, thereby only maintaining the area as constant. In addition, we present two alternative formulations that incorporate a single Lagrange multiplier to maintain either the perimeter-decreasing or area-preserving property. The resulting schemes exhibit high-order temporal accuracy and asymptotic mesh equidistribution during prolonged simulations.

The remainder of the paper is organized as follows. Section 2 delves into a thorough examination of the relationship between our proposed formulation and the classical BGN formulation, and presents the spatial semi-discretization. In Section 3, based on the new formulation, we present several time discretization methods for solving the curve diffusion, and prove the structure-preserving properties for the proposed fully discrete schemes. In Section 4, we explore some alternative approaches for designing high-order in time, structure-preserving PFEMs by using a single Lagrange multiplier which can maintain only one geometric structure. Section 5 presents extensive experiments to demonstrate the structure-preserving properties and the high-order temporal accuracy of the proposed schemes. Finally, we draw some conclusions in Section 6.

\section{Connection to the BGN formulation and spatial semi-discretization}

This section examines the relationship between our proposed formulation  \eqref{Lag} and the BGN formulation \eqref{BGNmodel}, and presents the spatial semi-discretization associated with \eqref{Lag}.

\subsection{Connection to the BGN formulation} By employing a parametrization $\bX(\cdot,t):\I\rightarrow \Gamma(t)\subset\R^2$ alongside the arc-length $s(\rho,t)$, we can identify the function over the closed curve $\Gamma(t)$ and the periodic interval $\I=[0,1]$. For the curve $\Gamma=\bX(\I)$, we introduce the following function space
\[
	L^2(\Gamma):=\Big\{u:\Gamma\rightarrow \R \,|\, \int_{\Gamma}|u(s)|^2\d s=\int_{\I}\big|u\circ \bX (\rho)\big|^2\ |\p_\rho\bX|\ \d \rho<+\infty \Big\},
\]
with the inner product given by
\begin{equation}\label{L^2 inner}
	\l(u,v\r)_{\Gamma}:=\int_{\Gamma}\, u(s)\, v(s)\ \d s=\int_{\I} \l(u\circ\bX(\rho)\r)\, \l(v\circ\bX(\rho)\r)\, |\p_\rho \bX|\ \d\rho.
\end{equation}

It is well-known that a circle \cite{Elliott-Garcke}, characterized by constant curvature, represents one equilibrium state of curve diffusion. In the following proposition, we demonstrate that the two Lagrange multipliers are both zero and our new formulation \eqref{Lag} will reduce to the BGN formulation \eqref{BGNmodel} before the evolving curve reaches its equilibrium state, i.e., a circle.

\smallskip

\begin{proposition}\label{Prop:cont}
 Assuming that $(\bX,\kappa,\lambda,\eta)$ constitutes a solution to \eqref{Lag} and $\Gamma(t)=\bX(\I,t)$ represents a closed $C^2$-curve, it follows that $\lambda=\eta=0$ if $\kappa(\cdot, t)$ is not constant with respect to the spatial variable.
 \end{proposition}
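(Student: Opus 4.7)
The plan is to substitute the prescribed normal velocity from \eqref{Lag1} into the general (geometric) evolution identities for perimeter and area, and then compare the result with the prescribed evolutions \eqref{Lag3}--\eqref{Lag4}. This should yield two linear equations in the unknowns $\lambda(t)$ and $\eta(t)$, whose coefficient matrix can be shown to be invertible precisely when $\kappa(\cdot,t)$ is non-constant.

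First, I would note that for any closed $C^2$-curve with normal velocity $v=\p_t\bX\cdot\bn$, the standard identities read
\[
  \frac{\d L}{\d t}=\int_{\Gamma(t)}\kappa\, v\ \d s,\qquad \frac{\d A}{\d t}=\int_{\Gamma(t)} v\ \d s.
\]
Substituting $v=\p_{ss}\kappa+\lambda\kappa+\eta$ from \eqref{Lag1} and using integration by parts on the closed curve (so that $\int_{\Gamma(t)}\kappa\,\p_{ss}\kappa\ \d s=-\int_{\Gamma(t)}|\p_s\kappa|^2\ \d s$ and $\int_{\Gamma(t)}\p_{ss}\kappa\ \d s=0$), I obtain
\[
  \frac{\d L}{\d t}=-\int_{\Gamma(t)}|\p_s\kappa|^2\ \d s+\lambda\!\int_{\Gamma(t)}\kappa^2\ \d s+\eta\!\int_{\Gamma(t)}\kappa\ \d s,\quad \frac{\d A}{\d t}=\lambda\!\int_{\Gamma(t)}\kappa\ \d s+\eta L.
\]
Comparing with the constraints \eqref{Lag3} and \eqref{Lag4} then gives the linear system
\[
  \begin{pmatrix} \int_{\Gamma(t)}\kappa^2\ \d s & \int_{\Gamma(t)}\kappa\ \d s \\[2pt] \int_{\Gamma(t)}\kappa\ \d s & L \end{pmatrix}\begin{pmatrix}\lambda\\ \eta\end{pmatrix}=\begin{pmatrix}0\\ 0\end{pmatrix}.
\]

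The final step is an application of the Cauchy--Schwarz inequality: the determinant of the coefficient matrix equals $L\int_{\Gamma(t)}\kappa^2\ \d s-\bigl(\int_{\Gamma(t)}\kappa\ \d s\bigr)^2$, which is non-negative, with equality if and only if $\kappa(\cdot,t)$ is (spatially) constant. Thus, under the hypothesis that $\kappa(\cdot,t)$ is not constant, the matrix is strictly positive definite, forcing $\lambda(t)=\eta(t)=0$. I do not expect any serious obstacle here; the only subtlety is justifying the integration-by-parts identities, which are immediate because $\Gamma(t)$ is closed and $\bX\in C^2$, and noting that $L>0$ and $\int\kappa^2\ \d s>0$ so the Cauchy--Schwarz comparison is meaningful.
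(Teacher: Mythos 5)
Your proof is correct and follows essentially the same route as the paper: both derive the homogeneous $2\times 2$ linear system with coefficient matrix $\bigl(\begin{smallmatrix}\int\kappa^2 & \int\kappa\\ \int\kappa & L\end{smallmatrix}\bigr)$ (you by substituting $v$ directly into the strong-form identities for $\d L/\d t$ and $\d A/\d t$, the paper by testing the weak form with $\kappa$ and $1$), and both conclude via the Cauchy--Schwarz equality case. The only cosmetic difference is strong-form substitution versus test functions; the key identities and the final argument are identical.
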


\begin{proof}
	Multiplying $\kappa$ and $\p_t\bX$ on both sides of \eqref{Lag1} and \eqref{Lag2}, respectively, integrating over the curve $\Gamma(t)$, noting $\p_t\bX\cdot\bn=v$ is the normal velocity, we obtain
	\begin{align*}
		\lambda(t)\l(\kappa,\kappa\r)_{\Gamma(t)}+\eta(t)\l(1,\kappa\r)_{\Gamma(t)}
	&= 	\l(\p_t\bX,\kappa\, \bn \r)_{\Gamma(t)}+\l( \p_s \kappa,\p_s\kappa \r)_{\Gamma(t)}\\
	&=\l(v,\kappa\r)_{\Gamma(t)}+\l( \p_s \kappa,\p_s\kappa \r)_{\Gamma(t)}\\
	&=\frac{\d L}{\d t}+\l( \p_s \kappa,\p_s\kappa \r)_{\Gamma(t)}=0,
	\end{align*}
where we used integration by parts, \eqref{Lag3} and \eqref{Evo of L:model}.
Similarly, 	multiplying $1$ and $\p_t\bX$ on the both sides of \eqref{Lag1} and \eqref{Lag2}, respectively, integrating over the curve $\Gamma(t)$, utilizing integration by parts, \eqref{Lag4} and \eqref{Evo of A:model}, we get
\[
	\lambda(t)\l( \kappa,1\r)_{\Gamma(t)}+\eta(t) \l( 1,1 \r)_{\Gamma(t)}
	=\l(v,1\r)_{\Gamma(t)}+\l( \p_s \kappa,\p_s 1 \r)_{\Gamma(t)}=\frac{\d A}{\d t}=0.
\]
Therefore, $\lambda(t)$ and $\eta(t)$ satisfy a time dependent algebraic system
\begin{equation*}
	\bM(t)\begin{bmatrix}
		\lambda(t)\\
		\eta(t)
	\end{bmatrix}=0,\qquad \bM(t)=\begin{bmatrix}
		\l(\kappa,\kappa\r)_{\Gamma(t)} & \l(\kappa,1\r)_{\Gamma(t)}\\
		\l(\kappa,1\r)_{\Gamma(t)} & \l(1,1\r)_{\Gamma(t)}
	\end{bmatrix}.
\end{equation*}
By the Cauchy-Schwarz inequality, we have
\[
	\det\l(\bM(t)\r)
	=\l(\kappa,\kappa\r)_{\Gamma(t)}\l(1,1\r)_{\Gamma(t)}-
\l(\kappa,1\r)_{\Gamma(t)}^2\ge 0,
\]
 and the equality holds if and only if $\kappa(\cdot, t)$ is constant (with respect to spatial variable). Consequently, the matrix $\bM(t)$ is invertible if $\kappa(\cdot, t)$ is not constant, which implies that $\lambda(t)=\eta(t)=0$, thus concluding the proof. \end{proof}

\subsection{Spatial semi-discrete scheme}

To introduce the spatial discretization, let $\mathbb{I}=[0,1]= \bigcup_{j=1}^N I_j$, $N\ge 3$, where $I_j=[\rho_{j-1},\rho_j]$ with $0=\rho_0<\rho_1<\ldots<\rho_N=1$. Denote $h=\max\limits_{1\le j\le N}
|\rho_j-\rho_{j-1}|$ by the maximal length of the grid. Define the linear finite element space as
\[
V^h:=\{u\in C(\mathbb{I}): u|_{I_j} \,\,\, \mathrm{is\,\,\,linear,\,\,\,} \forall j=1,2,\ldots,N;\quad u(\rho_0)=u(\rho_N) \}\subseteq H^1(\mathbb{I}).
\]
The inner product \eqref{L^2 inner} is applicable to the polygonal curve $\Gamma^h=\bX^h(\I)$, $\bX^h\in [V^h]^2$. In practice, we approximate it using the mass lumped inner product $(\cdot,\cdot)_{\Gamma^h}^h$ obtained by the composite trapezoidal rule, i.e.,
\begin{equation}\label{massp}
(u,v)_{\Gamma^h}^h:=\frac{1}{2}\sum_{j=1}^N|\mathbf{h}_j|\l[(u\cdot v)(\rho_j^-)+(u\cdot v)(\rho_{j-1}^+) \r], \quad \mathbf{h}_j=\bX^h(\rho_j)-\bX^h(\rho_{j-1}),
\end{equation}
where $\bX^h(\rho_j)$ is the vertex of the polygon $\Gamma^h$, and  $u, v$ are two scalar/vector piecewise continuous functions with possible jumps at the nodes $\{\rho_j\}_{j=0}^N$,
and $u(\rho_j^{\pm})=\lim\limits_{\rho\rightarrow \rho_j^{\pm}}u(\rho)$.
It can be easily seen that the mass lumped inner product coincides with the inner product \eqref{L^2 inner} if $u\cdot v$ is piecewise linear.

The semi-discrete scheme reads as follows: Given initial polygon $\Gamma^h(0)=\bX^h(\I,0)$, $\bX^h(\cdot, 0)\in [V^h]^2$, find $(\bX^h(\cdot,t),\kappa^h(\cdot,t), \lambda^h(t),\eta^h(t))\in [V^h]^2\times V^h\times \R\times \R$ such that for any $(\varphi^h, \bm{\omega}^h)\in V^h\times [V^h]^2$, it holds
\begin{subequations}\label{Lagsemi}
	\begin{align}
		\l(\p_t\bX^h,\varphi^h \bn^h \r)^h_{\Gamma^h}+\l( \p_s \kappa^h,\p_s\varphi^h \r)_{\Gamma^h}&-\lambda^h\l( \kappa^h,\varphi^h \r)_{\Gamma^h}^h-\eta^h\l( 1,\varphi^h \r)_{\Gamma^h}^h =0,\label{Lag1:semi} \\
 \l(\kappa^h\bn^h,\bm{\omega}^h\r)_{\Gamma^h}^h&-\l(\p_s \bX^h,\p_s\bm{\omega}^h\r)_{\Gamma^h}=0,\label{Lag2:semi}\\
\frac{\d L^h}{\d t}&+\l( \p_s\kappa^h,\p_s\kappa^h\r)_{\Gamma^h}=0,\label{Lag3:semi} \\
	\frac{\d A^h}{\d t}&=0,\label{Lag4:semi}
	\end{align}
\end{subequations}
where $\Gamma^h=\Gamma^h(t)=\bX^h(\I,t)$, the normal vector $\bn^h$ and the tangential gradient $\p_s$ are both piecewisely defined via
\begin{equation}\label{ndef}
\bn^h|_{I_j}=-\frac{\mathbf{h}_j^\perp}{|\mathbf{h}_j|}, \quad \p_s f|_{I_j}=\l.\frac{\p_\rho f}{|\p_\rho \mathbf{X}^h|}\r|_{I_j}=\frac{(\rho_j-\rho_{j-1})\p_\rho f|_{I_j}}{|\mathbf{h}_j|},\quad j=1,\ldots, N,
\end{equation}
where $(\cdot)^{\perp}$ is the clockwise rotation by $\frac{\pi}{2}$.
Moreover, the  perimeter $L^h(t)=L(\bX^h)$ and area $A^h(t)=A(\bX^h)$ are given by
\begin{align}
	L(\bX)&:=\int_{\Gamma(\bX)}1\, \d s=\int_{\I}|\p_\rho \bX| \ \d \rho ,\label{Def of L:cont} \\
   A(\bX)&:=\frac{1}{2}\int_{\Gamma(\bX)}\bX\cdot \bn\, \d s=-\frac{1}{2}\int_{\I}\bX\cdot(\p_{\rho}\bX)^\perp\, \d\rho.\label{Def of A:cont}
\end{align}
Concerning the properties of $L$ and $A$, we have the following lemma.

\smallskip
\begin{lemma}\label{Lem:Evo/weak}
	Let $\Gamma(t)=\bX(\I,t)$ represent a family of continuous, piecewise $C^1$ closed curves, with possible jumps at the nodes $\{\rho_j\}_{j=0}^N$ with $0=\rho_0<\rho_1<\ldots<\rho_N=1$ and $|\p_\rho \bX|\neq 0$. Moreover, assume that both $\bX$ and $\p_\rho \bX$ are differentiable with respect to time. Then the perimeter and area can be expressed as follows:
	\begin{align}
		\frac{\d L}{\d t}&=\l(\p_s\bX,\p_s\p_t\bX\r)_{\Gamma(t)},\label{Evo of L:cont}\\
		\frac{\d A}{\d t}&=\l(\p_t\bX,\bn\r)_{\Gamma(t)},\label{Evo of A:cont}
	\end{align}
	where $\bn$ and $\p_s$ are piecewisely defined normal and tangential gradient.
\end{lemma}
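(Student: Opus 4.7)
The plan is to verify the two identities by direct computation, differentiating the definitions \eqref{Def of L:cont}--\eqref{Def of A:cont} under the integral sign and then reorganising the integrands so that they match the stated inner products. Throughout I will keep in mind that $\p_\rho \bX$ is only piecewise $C^1$ (with possible jumps at the nodes $\rho_j$), so all spatial integrations by parts will be carried out on each subinterval $I_j$ and summed.

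For the perimeter, I would differentiate
\[
L(t)=\int_{\I}|\p_\rho \bX|\,\d\rho
\]
and use the chain rule $\frac{\d}{\d t}|\p_\rho\bX|=\frac{\p_\rho\bX\cdot\p_\rho\p_t\bX}{|\p_\rho\bX|}$, which is legitimate on each $I_j$ since $|\p_\rho\bX|\neq 0$ and both $\bX$ and $\p_\rho\bX$ are differentiable in $t$. Recalling the piecewise definition $\p_s f=\p_\rho f/|\p_\rho\bX|$ from \eqref{ndef}, the integrand rewrites as $\p_s\bX\cdot\p_s\p_t\bX\,|\p_\rho\bX|$, which upon integration in $\rho$ is precisely the inner product $(\p_s\bX,\p_s\p_t\bX)_{\Gamma(t)}$ defined in \eqref{L^2 inner}. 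This yields \eqref{Evo of L:cont}.

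For the area, I would differentiate
\[
A(t)=-\tfrac12\int_{\I}\bX\cdot(\p_\rho\bX)^\perp\,\d\rho
\]
to get the two terms $-\tfrac12\int_{\I}\p_t\bX\cdot(\p_\rho\bX)^\perp\,\d\rho$ and $-\tfrac12\int_{\I}\bX\cdot(\p_\rho\p_t\bX)^\perp\,\d\rho$. On the second term I would perform integration by parts piecewise on each $I_j$; the boundary contributions telescope to $\bX\cdot(\p_t\bX)^\perp\big|_0^1$ and vanish by periodicity because $\bX$ and $\p_t\bX$ (unlike $\p_\rho\bX$) are continuous across the nodes. Next I would invoke the elementary rotation identity $a\cdot b^\perp=-a^\perp\cdot b$ for $a,b\in\R^2$ to convert $\p_\rho\bX\cdot(\p_t\bX)^\perp$ into $-(\p_\rho\bX)^\perp\cdot\p_t\bX$. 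Combining the two pieces, the symmetrisation gives
\[
\frac{\d A}{\d t}=-\int_{\I}\p_t\bX\cdot(\p_\rho\bX)^\perp\,\d\rho.
\]
Finally, using the piecewise unit normal $\bn=-(\p_\rho\bX)^\perp/|\p_\rho\bX|$ consistent with \eqref{ndef}, the right-hand side becomes $\int_{\I}\p_t\bX\cdot\bn\,|\p_\rho\bX|\,\d\rho=(\p_t\bX,\bn)_{\Gamma(t)}$, establishing \eqref{Evo of A:cont}.

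The only subtle point, which I would flag explicitly, is the treatment of the jumps in $\p_\rho\bX$ at the nodes $\rho_j$ during the integration by parts for the area identity. The key observation is that only $\p_\rho\bX$ (and hence the normal) is discontinuous, while $\bX$ and $\p_t\bX$ remain continuous, so the inner boundary contributions cancel in pairs across adjacent intervals and the two remaining endpoint contributions annihilate by the periodicity of the closed curve. Once this is verified, both identities follow by a short computation, and no further regularity beyond the hypotheses of the lemma is required.
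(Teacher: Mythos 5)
Your proposal is correct and follows essentially the same route as the paper's proof: direct differentiation under the integral, the chain rule for $|\p_\rho\bX|$ in the perimeter identity, and for the area a piecewise integration by parts whose boundary terms cancel by the continuity of $\bX$ and $\p_t\bX$ together with periodicity, followed by the rotation identity $\mathbf{a}\cdot\mathbf{b}^\perp=-\mathbf{a}^\perp\cdot\mathbf{b}$. The subtle point you flag about the jumps of $\p_\rho\bX$ at the nodes is exactly the one the paper's argument relies on, so nothing is missing.
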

\begin{proof}
Differentiating $L$ and $A$ gives
\begin{align*}
	\frac{\d L}{\d t}
 	&=\frac{\d}{\d t}\int_{\I}|\p_\rho\bX|\ \d\rho=\sum_{j=1}^N\int_{\rho_{j-1}}^{\rho_j} \frac{\p_\rho \bX\cdot \p_\rho \p_t\bX}{|\p_\rho\bX|} \ \d\rho=\l(\p_s\bX,\p_s\p_t\bX\r)_{\Gamma(t)},\\
 	\frac{\d A}{\d t}
 	&=-\frac{1}{2}\frac{\d}{\d t}\int_{\I}\bX\cdot(\p_\rho\bX)^{\perp}\ \d \rho\\
 	&=-\frac{1}{2}\sum_{j=1}^N\int_{\rho_{j-1}}^{\rho_j}\p_t\bX\cdot(\p_\rho\bX)^{\perp}+\bX \cdot (\p_t\p_{\rho}\bX)^\perp \ \d \rho \\
 	&=\frac{1}{2}\l(\p_t\bX,\bn\r)_{\Gamma(t)}+\frac{1}{2}\sum_{j=1}^N\int_{\rho_{j-1}}^{\rho_j}\p_{\rho}\bX\cdot (\p_t\bX)^\perp\ \d\rho-\frac{1}{2}\sum_{j=1}^N\l[\bX\cdot (\p_t\bX)^\perp \r]\big|_{\rho_{j-1}}^{\rho_j} \\
 &=\frac{1}{2}\l(\p_t\bX,\bn\r)_{\Gamma(t)}-\frac{1}{2}\sum_{j=1}^N\int_{\rho_{j-1}}^{\rho_j}\p_t\bX\cdot(\p_\rho\bX)^{\perp}\ \d\rho\\
 	&=\l(\p_t\bX,\bn\r)_{\Gamma(t)},
\end{align*}
where we have used the continuity and the property $\mathbf{a}\cdot \mathbf{b}^\perp=-\mathbf{a}^\perp\cdot \mathbf{b}$ for two vectors $\mathbf{a},\mathbf{b}\in \R^2$, and the proof is completed.
\end{proof}

Applying Lemma \ref{Lem:Evo/weak} to $L^h$ and $A^h$, and noticing the mass lumped inner product \eqref{massp} is equal to the inner product \eqref{L^2 inner} for piecewise linear functions, we can get
\begin{align}
	\frac{\d L^h}{\d t}&=\l(\p_s\bX^h,\p_s\p_t\bX^h\r)_{\Gamma^h},\label{Evo of L:semi}\\
	\frac{\d A^h}{\d t}&=\l(\p_t\bX^h,\bn^h\r)_{\Gamma^h}=\l(\p_t\bX^h,\bn^h\r)^h_{\Gamma^h}.\label{Evo of A:semi}
\end{align}
Combining the evolution formulae \eqref{Evo of L:semi} and \eqref{Evo of A:semi}, we can generalize  Proposition \ref{Prop:cont} to the semi-discrete framework.

\smallskip

\begin{proposition}\label{Prop:semi}
 	Suppose $(\bX^h(\cdot,t),\kappa^h(\cdot,t), \lambda(t),\eta(t))\in [V^h]^2\times V^h\times \R\times \R$ is the solution of \eqref{Lagsemi}, then $\lambda^h(t)=\eta^h(t)=0$ if $\kappa^h(\cdot,t)$ is not constant.
 \end{proposition}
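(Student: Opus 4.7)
The plan is to carry out the same strategy as in Proposition \ref{Prop:cont} but at the semi-discrete level, using the mass-lumped inner product consistently and exploiting the evolution formulas \eqref{Evo of L:semi} and \eqref{Evo of A:semi} from Lemma \ref{Lem:Evo/weak}. Since $\kappa^h\in V^h$ and $1\in V^h$, both are admissible test functions in \eqref{Lag1:semi}, and $\p_t\bX^h\in [V^h]^2$ is an admissible test function in \eqref{Lag2:semi}. The goal is to derive a $2\times 2$ algebraic system for $(\lambda^h,\eta^h)$ whose coefficient matrix coincides with the semi-discrete analogue of $\bM(t)$ and whose determinant is strictly positive unless $\kappa^h$ is spatially constant.

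First I would take $\varphi^h=\kappa^h$ in \eqref{Lag1:semi} and $\bm{\omega}^h=\p_t\bX^h$ in \eqref{Lag2:semi}, then subtract the two identities. The mixed term $(\p_t\bX^h,\kappa^h\bn^h)^h_{\Gamma^h}=(\kappa^h\bn^h,\p_t\bX^h)^h_{\Gamma^h}$ cancels, leaving $(\p_s\kappa^h,\p_s\kappa^h)_{\Gamma^h}-(\p_s\bX^h,\p_s\p_t\bX^h)_{\Gamma^h}-\lambda^h(\kappa^h,\kappa^h)^h_{\Gamma^h}-\eta^h(1,\kappa^h)^h_{\Gamma^h}=0$. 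Substituting \eqref{Evo of L:semi} for the second term and then using \eqref{Lag3:semi} eliminates both Dirichlet-type terms and yields the first linear equation
\begin{equation*}
\lambda^h(\kappa^h,\kappa^h)^h_{\Gamma^h}+\eta^h(1,\kappa^h)^h_{\Gamma^h}=0.
\end{equation*}
Next I would repeat this with $\varphi^h=1$ and $\bm{\omega}^h=\p_t\bX^h$; the term $(\p_s\kappa^h,\p_s 1)_{\Gamma^h}$ vanishes, and subtracting the two resulting identities gives $(\p_t\bX^h,\bn^h)^h_{\Gamma^h}-\lambda^h(\kappa^h,1)^h_{\Gamma^h}-\eta^h(1,1)^h_{\Gamma^h}=0$ after using that $(\p_s\bX^h,\p_s\p_t\bX^h)_{\Gamma^h}$ no longer appears (since only $\bm{\omega}^h=\p_t\bX^h$ enters \eqref{Lag2:semi}). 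Invoking \eqref{Evo of A:semi} to identify $(\p_t\bX^h,\bn^h)^h_{\Gamma^h}$ with $\d A^h/\d t$ and then \eqref{Lag4:semi} to set this to zero produces the second equation $\lambda^h(\kappa^h,1)^h_{\Gamma^h}+\eta^h(1,1)^h_{\Gamma^h}=0$.

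These two identities assemble into $\bM^h(t)(\lambda^h,\eta^h)^{\mathrm{T}}=0$ with
\begin{equation*}
\bM^h(t)=\begin{bmatrix}
(\kappa^h,\kappa^h)^h_{\Gamma^h} & (\kappa^h,1)^h_{\Gamma^h}\\[2pt]
(\kappa^h,1)^h_{\Gamma^h} & (1,1)^h_{\Gamma^h}
\end{bmatrix}.
\end{equation*}
The last step is to verify that $\det(\bM^h(t))>0$ whenever $\kappa^h(\cdot,t)$ is not spatially constant. This follows from the discrete Cauchy--Schwarz inequality for the mass-lumped inner product \eqref{massp}: $(\kappa^h,\kappa^h)^h_{\Gamma^h}(1,1)^h_{\Gamma^h}\ge \big[(\kappa^h,1)^h_{\Gamma^h}\big]^2$, with equality if and only if the nodal values of $\kappa^h$ are all equal, i.e.\ $\kappa^h$ is a constant in $V^h$. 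The main obstacle I anticipate is purely bookkeeping: making sure that each cross term matches when one factor is evaluated in the mass-lumped inner product and the other in the exact $L^2$ inner product, together with the fact that $(\p_t\bX^h,\bn^h)_{\Gamma^h}$ and $(\p_t\bX^h,\bn^h)^h_{\Gamma^h}$ coincide (as recorded in \eqref{Evo of A:semi}), so that \eqref{Evo of A:semi} can be applied without introducing a quadrature discrepancy. Once this is observed, the argument closes exactly as in the continuous case.
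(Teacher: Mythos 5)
Your proposal is correct and follows essentially the same route as the paper: test \eqref{Lag1:semi} with $\kappa^h$ and $1$, test \eqref{Lag2:semi} with $\p_t\bX^h$, use \eqref{Evo of L:semi}--\eqref{Evo of A:semi} together with \eqref{Lag3:semi}--\eqref{Lag4:semi} to obtain the homogeneous $2\times2$ system, and conclude via Cauchy--Schwarz for the mass-lumped inner product. The only blemish is a sign slip in your intermediate prose (subtracting \eqref{Lag2:semi} from \eqref{Lag1:semi} yields $+\l(\p_s\bX^h,\p_s\p_t\bX^h\r)_{\Gamma^h}$, not $-$, which is precisely what makes the Dirichlet terms cancel against \eqref{Lag3:semi}); your displayed equations and conclusion are nonetheless correct.
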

\begin{proof}
  Taking $\varphi=\kappa^h$ in \eqref{Lag1:semi} and $\bm{\omega}^h=\p_t\bX^h$ in \eqref{Lag2:semi}, combining with \eqref{Lag3:semi} and \eqref{Evo of L:semi}, we obtain
\begin{align*}
	\lambda^h\l(\kappa^h,\kappa^h\r)^h_{\Gamma^h}+\eta^h\l(1,\kappa^h\r)^h_{\Gamma^h}
	&= 	\l(\p_t\bX^h,\kappa^h\, \bn^h \r)^h_{\Gamma^h}+\l( \p_s \kappa^h,\p_s\kappa^h \r)_{\Gamma^h}\\
	&=\l(\p_s\bX^h,\p_s\p_t\bX^h\r)_{\Gamma^h}+\l( \p_s \kappa^h,\p_s\kappa^h \r)_{\Gamma^h}\\
	&=\frac{\d L^h}{\d t}+\l( \p_s \kappa^h,\p_s\kappa^h\r)_{\Gamma^h}=0.
\end{align*}
On the other hand, by taking $\varphi^h=1$ in \eqref{Lag1:semi} and $\bm{\omega}^h=\p_t\bX^h$ in \eqref{Lag2:semi}, combining with \eqref{Lag4:semi} and \eqref{Evo of A:semi}, we get
\[
	\lambda^h\l( \kappa^h,1\r)^h_{\Gamma^h}+\eta^h \l( 1,1 \r)^h_{\Gamma^h}
	=\l(\p_t\bX^h, \bn^h \r)^h_{\Gamma^h}+\l( \p_s \kappa^h,\p_s 1 \r)_{\Gamma^h}=\frac{\d A^h}{\d t}=0.
\]
 We complete the proof by invoking the Cauchy-Schwarz inequality for the inner product $(\cdot,\cdot)^h_{\Gamma^h}$ and a similar argument in Proposition \ref{Prop:cont}. \end{proof}

\smallskip

\begin{remark}\label{Rem:semi}
	Similar to the continuous case, if $\kappa^h(\cdot, t)$ is not constant, then $\lambda^h(t)=\eta^h(t)=0$ and the semi-discrete scheme \eqref{Lagsemi} collapses to the classical BGN's semi-discrete scheme for curve diffusion \cite{BGN07A}:
	\begin{subequations}\label{BGNsemi}
	\begin{align}
		\l(\p_t\bX^h,\varphi^h \bn^h \r)^h_{\Gamma^h}+\l( \p_s \kappa^h,\p_s\varphi^h \r)_{\Gamma^h}&=0,\label{BGN1:semi} \\
 \l(\kappa^h\bn^h,\bm{\omega}^h\r)_{\Gamma^h}^h-\l(\p_s \bX^h,\p_s\bm{\omega}^h\r)_{\Gamma^h}&=0.\label{BGN2:semi}
	\end{align}
\end{subequations}
This formulation is well-known for its equidistribution property  \cite{BGN07A,BGN20,Zhao2021}. Moreover, it is noteworthy that the first-order structure-preserving scheme presented in \cite{Bao-Zhao} was established based on \eqref{BGNsemi}.
\end{remark}

\section{Time stepping discretization}\label{sec:SP}

As demonstrated in Propositions \ref{Prop:cont} and \ref{Prop:semi} (see also Remark \ref{Rem:semi}), the formulation \eqref{Lag} aligns with the classical BGN formulation \eqref{BGNmodel} until the equilibrium is reached. However, it is distinctive in its capacity to facilitate the development of higher-order accuracy, structure-preserving schemes through various time discretization methods. To incorporate time-stepping techniques for numerical solutions that involve implicit tangential motion, we employ the prediction methods detailed in \cite{Jiang23,Jiang/pre,Jiang24}. For the purpose of full discretization, we assume that $\bX^m\in [V^h]^2$ and $\Gamma^m=\bX^m(\I)$ are approximations of $\bX(\cdot,t_m)$ and $\Gamma(t_m)$, respectively, for $m=0,1,2,\ldots$, where $t_m:=m\tau$ and $\tau$ denotes a uniform time increment.

\subsection{Backward Euler time discretization}

As the most straightforward time discretization technique, we utilize the backward Euler discretization to the semi-discrete scheme \eqref{Lagsemi}, which reads as (denoted as the \textbf{SP-Euler scheme}): For $m\ge 0$, seek $(\mathbf{X}^{m+1},\kappa^{m+1}, \lambda^{m+1},\eta^{m+1})\in [V^h]^2\times V^h\times \R\times \R$ such that
\begin{subequations}\label{Lageuler}
	\begin{align}\label{Lag1:E}
		\begin{split}
			\l(\frac{\bX^{m+1}-\bX^m}{\tau},\varphi^h \bn^{m} \r)^h_{\Gamma^{m}}&+\l( \p_s \kappa^{m+1},\p_s\varphi^h \r)_{\Gamma^{m}}-\lambda^{m+1}\l( \kappa^{m+1},\varphi^h \r)_{\Gamma^{m}}^h \\
	&-\eta^{m+1}(1,\varphi^h)_{\Gamma^{m}}^h =0,\quad \forall \varphi^h\in V^h,\\
		\end{split}
	\end{align}
	\vspace{-15pt}
\begin{align}
	\l(\kappa^{m+1}\bn^{m},\bm{\omega}^h\r)_{\Gamma^{m}}^h&-\l(\p_s \bX^{m+1},\p_s\bm{\omega}^h\r)_{\Gamma^{m}}=0,\quad \forall \bm{\omega}^h\in [V^h]^2,\label{Lag2:E}\\
\frac{L^{m+1}-L^m}{\tau}&+\l( \p_s\kappa^{m+1},\p_s\kappa^{m+1}\r)_{\Gamma^{m}}=0,\label{Lag3:E} \\
	A^{m+1}-A^m&=0.\label{Lag4:E}
\end{align}
\end{subequations}
Here the normal vector $\bn^{m}$ and the partial derivative $\p_s$ are both defined for $\Gamma^m=\bX^{m}(\I)$ in a piecewise manner as outlined in \eqref{ndef}. Additionally,  $L^m=L(\bX^m)$ and  $A^m=A(\bX^m)$ denote the perimeter and area of the polygon $\Gamma^{m}$, respectively.
	
The following structure-preserving property is a direct consequence of \eqref{Lag3:E} and \eqref{Lag4:E}.

\smallskip
	
\begin{theorem}\label{Thm:E}
	Let $(\bX^{m+1},\kappa^{m+1}, \lambda^{m+1},\eta^{m+1})\in [V^h]^2\times V^h\times \R\times \R$ be the solution to the Euler scheme \eqref{Lageuler}. It then follows that
	\begin{equation*}
		L^{m+1}\le L^{m}\le \ldots\le L^0 ,\qquad A^{m+1}=A^m=\ldots\equiv A^0,\qquad m\ge 0.
	\end{equation*}
\end{theorem}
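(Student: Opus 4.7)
The statement is essentially a direct read-off from the two constraint equations \eqref{Lag3:E} and \eqref{Lag4:E} built into the scheme, so my plan is to argue by induction on $m$ using each of these in turn.

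First, for the area conservation, I would simply observe that \eqref{Lag4:E} states $A^{m+1}=A^m$, so by a trivial induction on $m\ge 0$ we obtain $A^{m+1}=A^m=\cdots=A^0$. No further estimates are required, since area preservation is enforced exactly at the discrete level by the Lagrange multiplier $\eta^{m+1}$.

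For the perimeter dissipation, I would rearrange \eqref{Lag3:E} into
\begin{equation*}
L^{m+1}-L^m=-\tau\l(\p_s\kappa^{m+1},\p_s\kappa^{m+1}\r)_{\Gamma^m}.
\end{equation*}
The right-hand side is $-\tau$ times the squared $L^2$-norm of $\p_s\kappa^{m+1}$ with respect to the inner product on $\Gamma^m$ defined in \eqref{L^2 inner}, hence non-positive. Consequently $L^{m+1}\le L^m$, and another induction on $m$ yields the chain $L^{m+1}\le L^m\le\cdots\le L^0$.

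There is essentially no obstacle here, because the perimeter-decreasing and area-preserving identities are hard-wired into the fully discrete scheme through the two scalar unknowns $\lambda^{m+1},\eta^{m+1}$. The only (tacit) caveat is that a solution $(\bX^{m+1},\kappa^{m+1},\lambda^{m+1},\eta^{m+1})$ is assumed to exist; the question of solvability is a separate issue that is not part of this statement. Thus the proof should be just a couple of lines combining \eqref{Lag3:E}, \eqref{Lag4:E}, positivity of the inner product, and induction.
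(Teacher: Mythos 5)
Your proof is correct and matches the paper, which simply notes that the theorem is a direct consequence of \eqref{Lag3:E} and \eqref{Lag4:E}: non-negativity of $\l(\p_s\kappa^{m+1},\p_s\kappa^{m+1}\r)_{\Gamma^m}$ gives $L^{m+1}\le L^m$, the fourth equation gives $A^{m+1}=A^m$, and induction finishes the argument. Your caveat about solvability being a separate issue is also consistent with the paper's treatment.
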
	

\smallskip

\begin{remark}
Compared to the first-order structure-preserving scheme proposed by Bao and Zhao \cite{Bao-Zhao}, our new scheme \eqref{Lageuler} enforces both  structure-preserving laws explicitly, with the primary focus on nonlinearity occurring in the final two equations \eqref{Lag3:E} and \eqref{Lag4:E}.
\end{remark}

\subsection{Crank-Nicolson time discretization}

Using the methodology outlined in \cite{Jiang/pre},  we subsequently apply the Crank-Nicolson discretization method to \eqref{Lagsemi}, thereby deriving the following scheme (denoted as the \textbf{SP-CN scheme}): for $m\ge 0$, find $(\mathbf{X}^{m+1},\kappa^{m+1}, \lambda^{m+1},\eta^{m+1})\in [V^h]^2\times V^h\times\R\times \R$ such that for all $(\varphi^h, \bm{\omega}^h)\in V^h\times [V^h]^2$, it holds
\begin{subequations}\label{LagCN}
\begin{align}\label{Lag1:CN}
  \begin{split}
	 \Big(\frac{\bX^{m+1}-\bX^m}{\tau},\varphi^h \widetilde{\bn}^{m+1/2} \Big)^h_{\widetilde{\Gamma}^{m+1/2}}&+\l( \p_s \kappa^{m+1/2},\p_s\varphi^h \r)_{\widetilde{\Gamma}^{m+1/2}} \\
	 -\lambda^{m+1/2}\l( \kappa^{m+1/2},\varphi^h \r)_{\widetilde{\Gamma}^{m+1/2}}^h &-\eta^{m+1/2}(1,\varphi^h)_{\widetilde{\Gamma}^{m+1/2}}^h =0,\\
  \end{split}
  \end{align}	
  \vspace{-15pt}
  \begin{align}
 \l(\kappa^{m+1/2}\widetilde{\bn}^{m+1/2},\bm{\omega}^h\r)_{\widetilde{\Gamma}^{m+1/2}}^h&-\l(\p_s \bX^{m+1/2},\p_s\bm{\omega}^h\r)_{\widetilde{\Gamma}^{m+1/2}}=0,\label{Lag2:CN}\\
\frac{L^{m+1}-L^m}{\tau}&+\l( \p_s\kappa^{m+1/2},\p_s\kappa^{m+1/2}\r)_{\widetilde{\Gamma}^{m+1/2}}=0,\label{Lag3:CN} \\
	A^{m+1}-A^m&=0,\label{Lag4:CN}
\end{align}
\end{subequations}
where
\[
\kappa^{m+1/2}=\frac{\kappa^{m+1}+\kappa^m}{2},\quad \lambda^{m+1/2}=\frac{\lambda^{m+1}+\lambda^{m}}{2},\quad  \eta^{m+1/2}=\frac{\eta^{m+1}+\eta^{m}}{2},
\]
and $\widetilde{\Gamma}^{m+1/2}=\widetilde{\bX}^{m+1/2}(\I)$ with  $\widetilde{\bX}^{m+1/2}$ being the solution of the SP-Euler scheme \eqref{Lageuler} with half time step $\tau/2$, the normal vector $\widetilde{\bn}^{m+1/2}$ and the partial derivative $\p_s$ are both piecewisely defined over $\widetilde{\Gamma}^{m+1/2}$.
The scheme \eqref{LagCN} is also structure-preserving in view of \eqref{Lag3:CN} and \eqref{Lag4:CN}.

\smallskip

\begin{theorem}\label{Thm:CN}
The solution of the SP-CN scheme \eqref{LagCN} satisfies
	\[
		L^{m+1}\le L^{m}\le \ldots\le L^0 ,\qquad A^{m+1}=A^m=\ldots\equiv A^0,\qquad m\ge 0.
	\]
\end{theorem}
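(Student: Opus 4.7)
The plan is to observe that Theorem \ref{Thm:CN} follows almost directly from the two constraint equations \eqref{Lag3:CN} and \eqref{Lag4:CN} that were built into the SP-CN scheme, together with a simple induction on the time index $m$. The structural content of the theorem is already encoded in the discretization, so the work is mostly in verifying that each term has the right sign.

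First I would handle area preservation. Equation \eqref{Lag4:CN} states $A^{m+1}-A^m=0$, so by induction $A^{m+1}=A^m=\cdots=A^0$ for all $m\ge 0$. This is immediate and requires no further argument.

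Next I would address the perimeter-decreasing property. Rearranging \eqref{Lag3:CN} gives
\begin{equation*}
L^{m+1}-L^m = -\tau\,\bigl(\p_s\kappa^{m+1/2},\p_s\kappa^{m+1/2}\bigr)_{\widetilde{\Gamma}^{m+1/2}}.
\end{equation*}
The right-hand side is $-\tau$ times the squared $L^2$-norm of $\p_s\kappa^{m+1/2}$ over the polygonal curve $\widetilde{\Gamma}^{m+1/2}$, and since $(\cdot,\cdot)_{\widetilde{\Gamma}^{m+1/2}}$ is a genuine inner product on piecewise continuous functions (the weights $|\mathbf{h}_j|$ from the predictor polygon $\widetilde{\Gamma}^{m+1/2}$ are strictly positive, as it is a nondegenerate polygon obtained from a half-step of the SP-Euler scheme), this quantity is nonnegative. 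Hence $L^{m+1}\le L^m$, and iterating in $m$ yields $L^{m+1}\le L^m\le\cdots\le L^0$.

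The only subtle point worth noting is the implicit assumption that the predictor $\widetilde{\Gamma}^{m+1/2}$ is a well-defined, nondegenerate polygon, so that the inner product $(\cdot,\cdot)_{\widetilde{\Gamma}^{m+1/2}}$ is positive semidefinite; this is inherited from the solvability of the SP-Euler scheme at the half time step and need not be reproved here. No further estimates, test-function choices, or algebraic manipulations are required, in contrast with Proposition \ref{Prop:semi} where one had to combine \eqref{Lag3:semi}--\eqref{Lag4:semi} with \eqref{Evo of L:semi}--\eqref{Evo of A:semi} to extract information about the Lagrange multipliers.
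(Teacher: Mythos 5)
Your proposal is correct and matches the paper's reasoning: the paper gives no separate proof for Theorem \ref{Thm:CN}, simply noting that the structure-preserving property follows directly from \eqref{Lag3:CN} and \eqref{Lag4:CN}, which is exactly your argument (nonnegativity of the $(\p_s\kappa^{m+1/2},\p_s\kappa^{m+1/2})_{\widetilde{\Gamma}^{m+1/2}}$ term plus induction). Your added remark about the nondegeneracy of the predictor polygon is a reasonable observation but not something the paper dwells on.
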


\subsection{Backward differentiation formulae for time discretization}

Using the idea presented in \cite{Jiang24}, we apply the backward differentiation formulae (BDF) for time discretization to the semi-discrete scheme \eqref{Lagsemi}. For the sake of simplicity, we focus on the second-order temporal approximation (referred to as the \textbf{SP-BDF2 scheme}): given $\Gamma^0$ and $\Gamma^{1}$, which serve as approximations of $\Gamma(0)$ and $\Gamma(\tau)$, respectively, we seek $(\mathbf{X}^{m+1},\kappa^{m+1}, \lambda^{m+1},\eta^{m+1})\in [V^h]^2\times V^h\times \R\times \R$ for $m\ge 1$, such that
\begin{subequations}\label{LagBDFk}
\begin{align}\label{Lag1:BDFk}
   \begin{split}
   	   \Big(\frac{\frac{3}{2}\bX^{m+1}-2\bX^m+\frac{1}{2}\bX^{m-1}}
   {\tau},\varphi^h \widetilde{\bn}^{m+1} \Big)^h_{\widetilde{\Gamma}^{m+1}}&+\l( \p_s \kappa^{m+1},\p_s\varphi^h \r)_{\widetilde{\Gamma}^{m+1}} \\
-\lambda^{m+1}\l( \kappa^{m+1},\varphi^h \r)_{\widetilde{\Gamma}^{m+1}}^h-\eta^{m+1}&(1,\varphi^h)_{\widetilde{\Gamma}^{m+1}}^h =0,\\
   \end{split}
\end{align}
\vspace{-13pt}
\begin{align}
 \l(\kappa^{m+1}\widetilde{\bn}^{m+1},\bm{\omega}^h\r)_{\widetilde{\Gamma}^{m+1}}^h-\l(\p_s \bX^{m+1},\p_s\bm{\omega}^h\r)_{\widetilde{\Gamma}^{m+1}}&=0,\label{Lag2:BDFk}\\
\frac{\frac{3}{2}L^{m+1}-2{L}^m+\frac{1}{2}L^{m-1}}{\tau}+\l( \p_s\kappa^{m+1},\p_s\kappa^{m+1}\r)_{\widetilde{\Gamma}^{m+1}}&=0,\label{Lag3:BDFk} \\
	\frac{3}{2}A^{m+1}-2A^m+\frac{1}{2}A^{m-1}&=0,\label{Lag4:BDFk}
\end{align}
\end{subequations}
where  $\widetilde{\Gamma}^{m+1}$, described by $\widetilde{\bX}^{m+1}\in [V^h]^2$, is predicted by the Euler scheme \eqref{Lageuler} with a full time step $\tau$. The normal vector $\widetilde{\bn}^{m+1}$ and the tangential gradient $\p_s$ are both piecewisely defined over $\widetilde{\Gamma}^{m+1}$. Moreover, the approximation $\Gamma^1$ for the first time step is obtained using the Euler scheme \eqref{Lageuler} with one time step $\tau$. Higher-order BDF$k(k\ge2)$ algorithms can be proposed similarly. For more details, we refer to \cite{Jiang24}.

\smallskip

\begin{theorem}\label{Thm:BDF}
 Let $(\bX^{m+1},\kappa^{m+1}, \lambda^{m+1},\eta^{m+1})\in [V^h]^2\times V^h\times\R\times \R$ be the solution of the  SP-BDF$2$ scheme \eqref{LagBDFk}, then it holds
	\[
		L^{m+1}\le L^{m}\le \ldots\le L^0 ,\qquad A^{m+1}=A^m=\ldots\equiv A^0,\qquad m\ge 0.
	\]
\end{theorem}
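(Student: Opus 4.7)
The plan is to handle the two conclusions, area preservation and perimeter monotonicity, essentially independently, since each follows from one of the two scalar ``constraint'' equations \eqref{Lag3:BDFk} and \eqref{Lag4:BDFk} together with an induction argument. The only subtlety, compared to Theorems \ref{Thm:E} and \ref{Thm:CN}, is that BDF2 is a two-step method, so both the base case and the induction step require attention.

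For the area, I would argue by induction on $m\ge 0$ that $A^{m+1}=A^m$. The base case $A^{1}=A^0$ is given for free: by construction, $\Gamma^1$ is obtained from $\Gamma^0$ by one step of the SP-Euler scheme with step size $\tau$, so Theorem~\ref{Thm:E} applied to that single step yields $A^1=A^0$. For the induction step, assume $A^m=A^{m-1}$. Rewriting \eqref{Lag4:BDFk} as
\begin{equation*}
A^{m+1}=\frac{4}{3}A^m-\frac{1}{3}A^{m-1},
\end{equation*}
and substituting $A^{m-1}=A^m$ gives $A^{m+1}=A^m$. Hence $A^{m+1}=A^m=\cdots=A^0$ for all $m\ge 0$.

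For the perimeter, the same two-step induction strategy applies. The base case $L^1\le L^0$ again follows from Theorem~\ref{Thm:E}, since $\Gamma^1$ comes from one SP-Euler step. For the induction step, suppose $L^m\le L^{m-1}$. From \eqref{Lag3:BDFk}, the nonnegativity of $(\p_s\kappa^{m+1},\p_s\kappa^{m+1})_{\widetilde{\Gamma}^{m+1}}$ gives
\begin{equation*}
\tfrac{3}{2}L^{m+1}-2L^m+\tfrac{1}{2}L^{m-1}\le 0,
\end{equation*}
which I rearrange as $L^{m+1}\le \tfrac{4}{3}L^m-\tfrac{1}{3}L^{m-1}$. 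Using the induction hypothesis $L^{m-1}\ge L^m$, the right-hand side is bounded above by $\tfrac{4}{3}L^m-\tfrac{1}{3}L^m=L^m$, yielding $L^{m+1}\le L^m$. Chaining the inequalities gives the stated monotone decrease.

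I expect no real obstacle: the proof is almost purely algebraic once one observes that the BDF2 weights $(\tfrac{3}{2},-2,\tfrac{1}{2})$ sum to zero and satisfy $\tfrac{4}{3}-\tfrac{1}{3}=1$, which is exactly what makes the recursion $y_{m+1}\le \tfrac{4}{3}y_m-\tfrac{1}{3}y_{m-1}$ propagate monotonicity. The only delicate point — and the reason the proof is not a one-liner like in the Euler case — is that without the induction hypothesis $L^m\le L^{m-1}$, inequality \eqref{Lag3:BDFk} alone does not imply $L^{m+1}\le L^m$; one genuinely needs to combine it with the prior step. This in turn forces the base step $L^1\le L^0$ to be produced by the startup procedure (the Euler step), which is precisely how the scheme was constructed.
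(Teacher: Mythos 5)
Your proof is correct and follows essentially the same route as the paper: both use the SP-Euler startup step to get the base case $L^1\le L^0$, $A^1=A^0$, and then an induction in which the BDF2 relation $\tfrac{3}{2}L^{m+1}-2L^m+\tfrac{1}{2}L^{m-1}\le 0$ is rearranged (the paper writes it as $\tfrac{3}{2}(L^{m+1}-L^m)\le\tfrac{1}{2}(L^m-L^{m-1})$, which is algebraically identical to your $L^{m+1}\le\tfrac{4}{3}L^m-\tfrac{1}{3}L^{m-1}$) together with the induction hypothesis to propagate monotonicity, and likewise for the area.
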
	
\begin{proof}
We prove the following by induction:
	\[
		L^{m+1}\le L^m,\qquad A^{m+1}=A^m.
	\]
Recalling $\bX^1$ is obtained by the SP-Euler scheme \eqref{Lageuler}, by Theorem \ref{Thm:E}, we have
\[
L^{1}\le L^0, \qquad A^1=A^0.
\]
For $m\ge 1$, the equations \eqref{Lag3:BDFk} and \eqref{Lag4:BDFk} imply that
	\begin{equation*}
		\frac{3}{2}L^{m+1}-2L^{m}+\frac{1}{2}L^{m-1}\le 0,\qquad 	\frac{3}{2}A^{m+1}-2A^{m}+\frac{1}{2}A^{m-1}= 0.
	\end{equation*}
	Therefore, by induction, we have
	\begin{align*}
		\frac{3}{2}\l(L^{m+1}-L^m\r)\le \frac{1}{2}\l(L^m-L^{m-1}\r)&\le 0,\\
		\frac{3}{2}\l(A^{m+1}-A^m\r)= \frac{1}{2}\l(A^m-A^{m-1}\r)&= 0,
	\end{align*}
	and the proof is completed. \end{proof}

\smallskip

\begin{remark}
It is feasible to derive the BDF\,$k$ schemes for $3\le k\le 6$ by using a similar approach to that described in \cite{Jiang24}. Moreover, by applying the arguments presented in Theorem \ref{Thm:BDF:AP}, it is possible to demonstrate  the area-preserving property.  However, proving the perimeter-decreasing property for these schemes appears challenging. The difficulty arises from the fact that the equation \eqref{Lag3:BDFk} associated with the BDF\,$k$ schemes (\,$3\le k\le 6$)
 does not guarantee the same unconditional energy stability as the BDF\,$2$ scheme.
For instance, \eqref{Lag3:BDFk} in BDF\,$3$ scheme gives
	\begin{align}
		0&\ge \frac{11}{6}L^{m+1}-\frac{18}{6}L^m+\frac{9}{6}L^{m-1}-\frac{2}{6}L^{m-2}\nonumber\\
     &=\frac{11}{6}(L^{m+1}-(1+a)L^m+aL^{m-1})-b(L^{m}-(1+a)L^{m-1}
     +aL^{m-2}).\label{decomp}
	\end{align}
If it is possible to find $a ,b>0$ such that the above equality holds, then by induction, we have
\begin{align*}
L^{m+1}-(1+a)L^m+aL^{m-1}&\le \frac{6b}{11}(L^{m}-(1+a)L^{m-1}
     +aL^{m-2})\\
     &\le \l(6b/11\r)^{m-1}(L^{2}-(1+a)L^{1}
     +aL^{0})\le 0,\end{align*}
if the initial step values satisfy $L^{2}-(1+a)L^{1}
     +aL^{0}\le 0$. It follows that
     \[L^{m+1}-L^m\le a(L^m-L^{m-1})\le a^{m}(L^1-L^0)\le 0,\]
if $L^1\le L^0$. Now it remains to seek positive $a$, $b$ such that \eqref{decomp} holds. Direct calculation yields
\[11a+6b=7,\quad ab=1/3,\]
unfortunately, this system does not admit real solutions for $a$ and $b$.
To our knowledge, there is currently no rigorous proof of energy stability for the BDF\,$k$ ($3\le k\le 6$) schemes when applied to the phase field model \cite{Shen2019}.
\end{remark}

\smallskip

\begin{remark}\label{Change third equation}
We stress that the discretization for \eqref{Lag3:semi}, e.g.,  \eqref{Lag3:BDFk}, is also crucial to maintain the desired temporal accuracy. For instance, consider replacing \eqref{Lag3:BDFk} by
\begin{equation}\label{Lag3:BDFk:change}
\frac{L^{m+1}-{L}^m}{\tau}+\l( \p_s\kappa^{m+1},\p_s\kappa^{m+1}\r)_{\widetilde{\Gamma}^{m+1}}=0.
\end{equation}
This adjustment would simplify the proof of the perimeter-decreasing property, however, it does not guarantee that all equations are discretized with consistent second-order temporal accuracy. Indeed, numerical experiments (cf. Figure \ref{Fig:EOC}) indicate that the convergence order is less than $2$ for the scheme \eqref{LagBDFk} with \eqref{Lag3:BDFk} replaced by \eqref{Lag3:BDFk:change}.
\end{remark}

\subsection{Newton's iteration method and modification for long-time evolution}

In this subsection we discuss the implementation of the nonlinear systems for the SP-Euler scheme \eqref{Lageuler}, the SP-CN scheme \eqref{LagCN} and the SP-BDF2 scheme \eqref{LagBDFk} by Newton's iteration. The implementation of Newton's iteration requires the variation of the perimeter and area functional. Indeed, let $\Gamma$ be a closed polygon  parametrized by $\bX\in [V^h]^2$. We consider a family of closed polygonal curves parametrized by $\bY(\varepsilon)=\bX+\varepsilon\bX^{\delta}$,  $\bX^{\delta}\in [V^h]^2$, then Lemma \ref{Lem:Evo/weak} yields that
\begin{align}
	\frac{\delta L(\bX)}{\delta \bX}(\bX^\delta)
	&=\l.\frac{\d L(\bY(\varepsilon))}{\d \varepsilon}\r|_{\varepsilon=0}=\l(\p_s\bX, \p_s\bX^\delta\r)_{\Gamma},\label{First variation of L}\\
	\frac{\delta A(\bX)}{\delta \bX}(\bX^\delta)
	&=\l.\frac{\d A(\bY(\varepsilon))}{\d \varepsilon}\r|_{\varepsilon=0}=\l(\bX^\delta,\bn \r)_{\Gamma}= \l(\bX^\delta,\bn \r)^h_{\Gamma},\label{First variation of A}
\end{align}
where we utilized the fact $\p_\varepsilon\bY=\bX^{\delta}$.

\smallskip

We take the SP-Euler scheme \eqref{Lageuler} as a representative example, and the other two systems can be solved in a similar manner. By the area-preserving property presented in Theorem \ref{Thm:E}, we can replace \eqref{Lag4:E} by a simpler one
\[
	A^{m+1}-A^0=0.
\]
Thus we get Newton's iteration as follows: For given $(\bX^{m},\kappa^{m}, \lambda^{m}, \eta^{m})\in [V^h]^2\times V^h\times \R \times \R$, we establish the initial predicted solution for the subsequent step as
\[
	\bX^{m+1,0}=\bX^{m},\quad \kappa^{m+1,0}=\kappa^{m},\quad
	\lambda^{m+1,0}=\lambda^{m},\quad \eta^{m+1,0}=\eta^{m}.
\]
In $i$-th Newton's iteration step, find the Newton's direction $(\bX^{\delta},\kappa^{\delta}, \lambda^{\delta},\eta^{\delta})\in [V^h]^2\times V^h\times\R\times \R$ such that for all $(\varphi_h, \bm{\omega}^h)\in V^h\times [V^h]^2$, it holds
\begin{subequations}\label{LagNewton}
	\begin{align}\label{Lag1:Newton}
		\begin{split}
			&\frac{1}{\tau}\l(\bX^{\delta},\varphi^h \bn^{m} \r)^h_{\Gamma^{m}}+\l( \p_s\kappa^{\delta},\p_s\varphi^h \r)_{\Gamma^{m}}^h -\lambda^\delta\l( \kappa^{m+1,i},\varphi^h \r)_{\Gamma^{m}}^h-\lambda^{m+1,i}\l(\kappa^{\delta},\varphi^h \r)_{\Gamma^{m}}^h \\
		&\qquad-\eta^\delta (1,\varphi^h)_{\Gamma^{m}}^h =-\l(\frac{\bX^{m+1,i}-\bX^{m} }{\tau},\varphi^h \bn^{m}\r)^h_{\Gamma^{m}} -\l( \p_s \kappa^{m+1,i},\p_s \varphi^h \r)_{\Gamma^{m}}^h \\
		&\qquad\qquad\qquad\qquad\quad\,\,\, +\lambda^{m+1,i}\l(\kappa^{m+1,i},\varphi^h \r)_{\Gamma^{m}}^h+\eta^{m+1,i}(1,\varphi^h)_{\Gamma^{m}}^h,\\
		\end{split}
	\end{align}
	\vspace{-7pt}
	\begin{align}
		\begin{split}\label{Lag2:Newton}
			&\l(\kappa^{\delta }\bn^{m},\bm{\omega}^h\r)_{\Gamma^{m}}^h-\l(\p_s \mathbf{X}^{\delta},\p_s\bm{\omega}^h\r)_{\Gamma^{m}}\\
	  &\qquad\qquad =-\l(\kappa^{m+1,i}\mathbf{n}^{m},\bm{\omega}^h\r)_{\Gamma^{m}}^h+\l(\p_s \mathbf{X}^{m+1,i},\p_s\bm{\omega}^h\r)_{\Gamma^{m}}, \\
		\end{split}
	\end{align}
	\vspace{-7pt}
	\begin{align}\label{Lag3:Newton}
	  \begin{split}
		&\frac{1}{\tau} \l(\p_s\bX^{m+1,i}, \p_s \bX^\delta\r)_{\Gamma^{m+1,i}}+2\l(\p_s \kappa^{\delta},\p_s\kappa^{m+1,i}\r)_{\Gamma^{m}} \\
	 &\qquad\qquad =-\frac{L^{m+1,i}-L^{m}}{\tau}-\l(\p_s \kappa^{m+1,i},\p_s\kappa^{m+1,i}\r)_{\Gamma^{m}}^h,
	  \end{split}	
	\end{align}
	\vspace{-7pt}
	\begin{align}\label{Lag4:Newton}
		&\l(\bX^\delta ,\mathbf{n}^{m+1,i} \r)^h_{\Gamma^{m+1,i}} = -A^{m+1,i}+A^0,
	\end{align}
\end{subequations}
where $\mathbf{n}^{m+1,i}$ is defined as in \eqref{ndef} for $\bX^{m+1,i}$, $L^{m+1,i}=L(\bX^{m+1,i})$ and $A^{m+1,i}=A(\bX^{m+1,i})$ are the perimeter and area of the polygon $\Gamma^{m+1,i}=\bX^{m+1,i}(\I)$, respectively. Note here on the left hand sides of  \eqref{Lag3:Newton} and \eqref{Lag4:Newton}, we used the variation formulae \eqref{First variation of L} and \eqref{First variation of A}, respectively for each polygon $\Gamma^{m+1,i}$ at the $i$-th iteration.
Then the iteration is updated via
\begin{align*}
	\bX^{m+1,i+1}=\bX^{m+1,i}+\bX^\delta ,&\qquad \kappa^{m+1,i+1}=\kappa^{m+1,i}+\kappa^\delta,\\
	\lambda^{m+1,i+1}=\lambda^{m+1,i}+\lambda^\delta ,&\qquad \eta^{m+1,i+1}=\eta^{m+1,i}+\eta^\delta,
\end{align*}
until
$$\max\{\|\bX^\delta\|_{L^\infty(\I)}, \|\kappa^\delta\|_{L^\infty(\I)}, |\lambda^\delta|, |\eta^\delta|\}\le \mathrm{tol},$$
 for some given tolerance $\mathrm{tol}$, and the numerical solution of the nonlinear system \eqref{Lageuler} is given by
\[
	\mathbf{X}^{m+1}=\mathbf{X}^{m+1,i+1},\quad \kappa^{m+1}=\kappa^{m+1,i+1},\quad
	\lambda^{m+1}=\lambda^{m+1,i+1}, \quad \eta^{m+1}=\eta^{m+1,i+1}.\]
Through the entire paper, the tolerance of Newton's iteration method is held constant at $\mathrm{tol}=10^{-10}$.

\smallskip

\begin{remark}\label{Rem:matrix}
We observe that the structure of the stiffness matrix for the system described by \eqref{LagNewton} is closely associated to $\kappa^{m+1}$. Specifically, the system \eqref{LagNewton} can be written as
	\[
	\begin{bmatrix}
		\mathbf{P} & \mathbf{Q} & \mathbf{a}_1 & \mathbf{a}_2\\
		\mathbf{R} & \mathbf{P}^T & 0 &0\\
		\mathbf{b}_1  & \mathbf{b}_2 & 0 &0\\
		\mathbf{c} & 0 & 0 & 0
	\end{bmatrix}
	\begin{bmatrix}
		\bX_\delta\\
		\bm{\kappa}_\delta\\
		\lambda^\delta \\
		\eta^\delta
	\end{bmatrix}=\begin{bmatrix}
		\mathbf{F}_1\\
		\mathbf{F}_2\\
		f_1\\
		f_2
	\end{bmatrix},
\]
where $\mathbf{P}\in \mathbb{R}^{N\times 2N}$, $\mathbf{Q}\in \mathbb{R}^{N\times N}$, $\mathbf{a}_1, \mathbf{a}_2\in \mathbb{R}^{N\times 1}$, $\mathbf{R}\in \mathbb{R}^{2N\times 2N}$, $\mathbf{b}_1,\mathbf{c}\in \mathbb{R}^{1\times 2N}$,  $\mathbf{b}_2\in \mathbb{R}^{1\times N}$, $\bX_\delta, \mathbf{F}_1\in\mathbb{R}^{2N\times 1}$, $\bm{\kappa}_\delta, \mathbf{F}_2\in\mathbb{R}^{N\times 1}$ and $f_1,f_2\in \mathbb{R}$.
Particularly, $\bm{\kappa}_\delta=\l(\kappa^\delta(\rho_1),\ldots,\kappa^\delta(\rho_N)\r)^T$.
Note that
$$\mathbf{a}_1=-\l(\l( \kappa^{m+1,i},\varphi_1\r)_{\Gamma^{m}}^h,
\ldots,\l( \kappa^{m+1,i},\varphi_N\r)_{\Gamma^{m}}^h\r)^T,$$
 where $\{\varphi_j\}_{j=1}^N$ are basis functions of $V^h$, and
\[\mathbf{a}_2=-\l(\l( 1,\varphi_1\r)_{\Gamma^{m}}^h,
\ldots,\l(1,\varphi_N\r)_{\Gamma^{m}}^h\r)^T,\]
thus $\mathbf{a}_1$ and $\mathbf{a}_2$ are linearly dependent
if $\kappa^{m+1,i}\in V^h$ is constant. This implies that Newton's iteration may struggle to converge within a few steps when $\kappa^m\in V^h$ is approximately constant. Consequently, it is not appropriate to employ the structure-preserving schemes with two Lagrange multipliers when the solution approaches equilibrium. In such instances, we transfer to an area-preserving scheme with a single Lagrange multiplier (cf. Section 4).
\end{remark}

\smallskip

\section{Perimeter-decreasing or area-preserving scheme with a single Lagrange multiplier}\label{sec:one:Lag}

 As highlighted in Remark \ref{Rem:matrix}, the formulation \eqref{Lag} may no longer be suitable for simulating the long-time evolution after reaching its equilibrium state. The challenge arises from the singularity of the stiffness matrix in Newton's iteration method when attempting to ensure two geometric structures simultaneously. To address this issue, we employ a simpler formulation that enforce only one geometric structure. Consequently, numerical methods for both first- and high-order PFEMs are capable of achieving stable long-time evolutions.

\subsection{Perimeter-decreasing schemes for long-time evolution}
We consider the following formulation with only one Lagrange multiplier for solving curve diffusion:
\begin{subequations}\label{LagPD}
\begin{align}
	\p_t\bX\cdot \bn
		&=\p_{ss}\kappa+\lambda(t) \kappa \label{Lag1:Long:PD} ,\\
		\kappa\bn &= -\p_{ss}\bX,\label{Lag2:Long:PD}\\
		\frac{\d L}{\d t}&=-\int_{\Gamma(t)}|\p_s\kappa|^2 \ \d s.\label{Lag3:Long:PD}
\end{align}	
\end{subequations}
By a similar discussion as presented in Proposition \ref{Prop:cont}, this formulation aligns with the BGN formulation if $\kappa\not\equiv 0$, suggesting that numerical methods based on this formulation may offer advantageous mesh quality during long-time evolution.

The schemes presented in Section 3 can be constructed in a similar way. We utilize the BDF$2$ time discretization as a representative example (denoted as the \textbf{PD-BDF$2$} scheme): Given $\Gamma^0$ and $\Gamma^{1}$, where $\Gamma^1=\bX^1(\I)$ and $\bX^1$ is the solution of the corresponding first-order Euler scheme for \eqref{LagPD}, for $m\ge 1$, determine $(\mathbf{X}^{m+1},\kappa^{m+1}, \lambda^{m+1})\in [V^h]^2\times V^h\times \R$ such that
\begin{subequations}\label{LagBDFkPD}
\begin{align}\label{Lag1:BDFk:Long:PD}
   \begin{split}
\l(\frac{\frac{3}{2}\bX^{m+1}-2\bX^m+\frac{1}{2}\bX^{m-1}}{\tau},\varphi^h \widetilde{\bn}^{m+1} \r)^h_{\widetilde{\Gamma}^{m+1}}&+\l( \p_s \kappa^{m+1},\p_s\varphi^h \r)_{\widetilde{\Gamma}^{m+1}} \\
-\lambda^{m+1}\l( \kappa^{m+1},\varphi^h \r)_{\widetilde{\Gamma}^{m+1}}^h& =0, \\
   \end{split}
\end{align}
\vspace{-10pt}
\begin{align}
 \l(\kappa^{m+1}\widetilde{\bn}^{m+1},\bm{\omega}^h\r)_{\widetilde{\Gamma}^{m+1}}^h-\l(\p_s \bX^{m+1},\p_s\bm{\omega}^h\r)_{\widetilde{\Gamma}^{m+1}}&=0,\label{Lag2:BDFk:Long:PD}\\
\frac{\frac{3}{2}L^{m+1}-2{L}^m+\frac{1}{2}L^{m-1}}{\tau}+\l( \p_s\kappa^{m+1},\p_s\kappa^{m+1}\r)_{\widetilde{\Gamma}^{m+1}}&=0.\label{Lag3:BDFk:Long:PD}
\end{align}
\end{subequations}
Similar to Theorem \ref{Thm:BDF}, \eqref{Lag3:BDFk:Long:PD} implies the perimeter-decreasing property.

\smallskip

\begin{theorem}\label{Thm:BDF:PD}
	Suppose $(\bX^{m+1},\kappa^{m+1}, \lambda^{m+1})\in [V^h]^2\times V^h\times\R$ is the solution of the PD-BDF$2$ scheme \eqref{LagBDFkPD}, then it holds
	\begin{equation*}
		L^{m+1}\le L^{m}\le \cdots\le L^0 ,\quad m\ge 0.
	\end{equation*}
\end{theorem}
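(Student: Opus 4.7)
The plan is to adapt directly the argument used in the proof of Theorem \ref{Thm:BDF}, dropping the area-preserving part, since only the perimeter inequality needs to be established. The structural ingredient is the BDF\,$2$ identity applied to the scalar sequence $\{L^m\}$, combined with an induction anchored by the behavior of the PD-Euler scheme at the first step.

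First I would establish the base case $L^1\le L^0$. By construction, $\bX^1$ is produced by the first-order (PD-Euler) analogue of \eqref{LagPD}, whose equation corresponding to \eqref{Lag3:E} reads
\begin{equation*}
\frac{L^1-L^0}{\tau}+\bigl(\p_s\kappa^1,\p_s\kappa^1\bigr)_{\Gamma^0}=0,
\end{equation*}
so the right-hand side is non-positive and $L^1\le L^0$ follows immediately (this is the direct PD-analogue of Theorem \ref{Thm:E}).

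Next, for the inductive step, I would test \eqref{Lag3:BDFk:Long:PD} to obtain
\begin{equation*}
\tfrac{3}{2}L^{m+1}-2L^m+\tfrac{1}{2}L^{m-1}=-\tau\bigl(\p_s\kappa^{m+1},\p_s\kappa^{m+1}\bigr)_{\widetilde{\Gamma}^{m+1}}\le 0,
\end{equation*}
and then rewrite this as
\begin{equation*}
\tfrac{3}{2}\bigl(L^{m+1}-L^m\bigr)\le \tfrac{1}{2}\bigl(L^m-L^{m-1}\bigr).
\end{equation*}
Assuming inductively that $L^m-L^{m-1}\le 0$, the right-hand side is non-positive, hence $L^{m+1}\le L^m$, which closes the induction. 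Iterating then yields the chain $L^{m+1}\le L^m\le\cdots\le L^0$.

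There is no substantive obstacle; this is essentially the perimeter half of the proof of Theorem \ref{Thm:BDF}, and the only point worth remarking on is that the argument crucially uses the unconditional energy stability of BDF\,$2$ (the coefficient $\tfrac{1}{2}$ on the right being strictly smaller than the coefficient $\tfrac{3}{2}$ on the left), which is why, as noted in the remark following Theorem \ref{Thm:BDF}, the same strategy does not extend routinely to BDF\,$k$ with $k\ge 3$.
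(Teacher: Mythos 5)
Your proposal is correct and follows essentially the same route as the paper, which simply invokes the perimeter half of the proof of Theorem \ref{Thm:BDF}: the base case from the first-order PD-Euler step and the induction via $\tfrac{3}{2}(L^{m+1}-L^m)\le \tfrac{1}{2}(L^m-L^{m-1})$ are exactly the intended argument.
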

	
In contrast to the SP-BDF2 scheme \eqref{LagBDFk}, this scheme can be effectively solved by Newton's iteration method even when the solution is close to the equilibrium. Note that this scheme is notable  from a numerical perspective for its unconditional energy stability and second-order accuracy.

\subsection{Area-preserving schemes for long-time evolution}
\label{aps}
It is also possible to consider the following formulation for curve diffusion:
\begin{subequations}\label{LagAP}
\begin{align}
	\p_t\bX\cdot \bn
		&=\p_{ss}\kappa+\eta(t)\label{Lag1:Long:AP} ,\\
		\kappa\bn &= -\p_{ss}\bX,\label{Lag2:Long:AP}\\
		\frac{\d A}{\d t}&=0.\label{Lag4:Long:AP}
\end{align}	
\end{subequations}	
Similar to the discussion in Proposition \ref{Prop:cont}, we can infer that \eqref{LagAP} coincides with \eqref{BGNmodel}, i.e., $\eta(\cdot, t)=0$, if $\int_{\Gamma(t)}\d s\neq 0$.

We consider the general BDF$k$ ($2\le k\le 6$) time discretization with prediction for system \eqref{Lag1:Long:AP}-\eqref{Lag4:BDFk:Long:AP} (denoted as the \textbf{AP-BDF$k$} scheme): Given $\Gamma^0,\ldots,\Gamma^{k-1}$ as suitable approximations of $\Gamma(t_0),\ldots$,\ $\Gamma(t_k)$,  for $m\ge k-1$, find $(\mathbf{X}^{m+1},\kappa^{m+1},\lambda^{m+1})\in [V^h]^2\times V^h \times\R$ such that
\begin{subequations}\label{LagBDFkAP}
\begin{align}\label{Lag1:BDFk:Long:AP}
   \begin{split}
   	  \Big(\frac{1}{\tau}\sum_{i=0}^k\delta_i\bX^{m+1-i},\varphi^h \widetilde{\bn}^{m+1} \Big)^h_{\widetilde{\Gamma}^{m+1}}+\l( \p_s \kappa^{m+1},\p_s\varphi^h \r)&_{\widetilde{\Gamma}^{m+1}} \\
\qquad\qquad\qquad\qquad\qquad\qquad\qquad\qquad -\eta^{m+1}\l( 1,\varphi^h \r)_{\widetilde{\Gamma}^{m+1}}^h &=0, \\
   \end{split}
\end{align}
\vspace{-9pt}
\begin{align}
 \l(\kappa^{m+1}\widetilde{\bn}^{m+1},\bm{\omega}^h\r)_{\widetilde{\Gamma}^{m+1}}^h-\l(\p_s \bX^{m+1},\p_s\bm{\omega}^h\r)_{\widetilde{\Gamma}^{m+1}}&=0,\label{Lag2:BDFk:Long:AP}\\
	\sum_{i=0}^k\delta_iA^{m+1-i}&=0.\label{Lag4:BDFk:Long:AP}
\end{align}
\end{subequations}
Here the coefficients of the method are given by
\begin{equation}\label{BDF:coe}
	\delta(\zeta)=\sum_{i=0}^k\delta_i \zeta^i=\sum_{\ell=1}^k\frac{1}{\ell}(1-\zeta)^\ell.
\end{equation}
Moreover, $\Gamma^0=\bX^0(\I),\ldots,\Gamma^{k-1}=\bX^k(\I)$ are given by the solutions of lower-order BDF$k$ methods with smaller time step size such that the errors of the initial $k$ curves are at $O(\tau^k)$, $\widetilde{\Gamma}^{m+1}$ are predicted by $(k-1)$th-order BDF$k$ method. Particularly, the first-order BDF$k$ method represents the corresponding backward Euler scheme, which coincides with \eqref{Lageuler} with $\lambda^{m+1}=0$ and without \eqref{Lag3:E}. We refer to \cite{Jiang24} for more details and implementations for BDF$k$ methods with $2\le k\le 4$, and refer to \cite{Hairer} for a general introduction to BDF methods.

\begin{theorem}\label{Thm:BDF:AP}
	The AP-BDF$k$ scheme \eqref{LagBDFkAP} is area-preserving:
\[
	A^{m+1}=A^m=\cdots\equiv A^0,\quad m\ge 0.
	\]
\end{theorem}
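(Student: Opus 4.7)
The plan is to argue by induction on $m$, using the single algebraic constraint \eqref{Lag4:BDFk:Long:AP}, $\sum_{i=0}^k \delta_i A^{m+1-i}=0$, together with the consistency of the BDF$k$ method to propagate a constant area. Unlike the proofs of Theorems \ref{Thm:E}, \ref{Thm:CN}, \ref{Thm:BDF}, no geometric cancellation is needed here: everything reduces to a linear recurrence on the scalar sequence $\{A^m\}$.

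The key algebraic observation is that $\delta(1)=\sum_{\ell=1}^k \frac{1}{\ell}(1-1)^\ell = 0$, so the coefficients satisfy the consistency identity $\sum_{i=0}^k \delta_i = 0$, equivalently $\delta_0 = -\sum_{i=1}^k \delta_i$. Moreover, reading off the constant term of $\delta(\zeta)$ gives $\delta_0 = \sum_{\ell=1}^k \frac{1}{\ell} > 0$, so the recurrence can be solved for $A^{m+1}$. Suppose inductively that $A^{m} = A^{m-1} = \cdots = A^{m+1-k} =: A^\ast$. Substituting into \eqref{Lag4:BDFk:Long:AP} yields
\[
\delta_0 A^{m+1} + A^\ast \sum_{i=1}^k \delta_i = 0,
\]
and by the consistency identity the second term equals $-\delta_0 A^\ast$, so $A^{m+1} = A^\ast$. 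This is the inductive step, and it only requires that the last $k$ area values be equal.

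It remains to furnish the base case, namely $A^0 = A^1 = \cdots = A^{k-1}$. According to the initialization in the scheme, each starting value $\Gamma^j$ for $1\le j\le k-1$ is computed by a lower-order BDF method (eventually bottoming out at the backward Euler/BDF$1$ step, which is precisely \eqref{Lageuler} with $\lambda^{m+1}=0$ and \eqref{Lag3:E} omitted). The backward Euler step enforces $A^{m+1}=A^m$ directly from the area equation, and an outer induction on the order $k$ shows that if all lower-order AP-BDF schemes preserve area, then so does the initialization of AP-BDF$k$. Concatenating this with the inductive step above gives $A^{m+1}=A^m$ for every $m\ge 0$ and hence the claimed identity $A^{m+1}=A^0$. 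The only potential subtlety to be careful about is that the induction relies on $\delta_0\ne 0$ and on the initialization producing exactly (not approximately) equal area values; both hold by the explicit form of $\delta(\zeta)$ and by the exact area-preserving character of each lower-order step, respectively.
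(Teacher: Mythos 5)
Your proposal is correct and follows essentially the same route as the paper: the consistency identity $\sum_{i=0}^k\delta_i=\delta(1)=0$ combined with $\delta_0>0$ gives the inductive step $\delta_0(A^{m+1}-A^\ast)=0$, and the base case is handled by an outer induction on the order, bottoming out at the backward Euler step. The only cosmetic difference is that you justify $\delta_0=\sum_{\ell=1}^k\frac{1}{\ell}>0$ explicitly from the constant term of $\delta(\zeta)$, which the paper merely asserts.
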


\begin{proof}
We prove by induction for $1\le k\le 6$ and $0\le m\le T/\tau$. For $k=1$, the AP-BDF$k$ scheme is actually the backward Euler scheme \eqref{Lageuler} with $\eta^{m+1}=0$ and without \eqref{Lag3:E}. Thus \eqref{Lag4:E} gives $A^m=\cdots\equiv A^0$ directly.
Now suppose the conclusion is valid for the AP-BDF$k$ schemes with order $k\le r<6$, we show the conclusion for the AP-BDF$k$ scheme with order $r+1$ by induction for $0\le m\le T/\tau$.
By the setting of the initial curves $\Gamma^1$, $\ldots$, $\Gamma^{r}$ and the induction, for the $(r+1)$th-order BDF$k$ scheme, it holds
\[A^{r}=A^{r-1}=\ldots=A^0.\]
For the AP-BDF$k$ scheme with order $r+1$, suppose
\[A^m=A^{m-1}=\ldots\equiv A^0,\quad r\le m<T/\tau,\]
we show that $A^{m+1}=A^0$.
The equation \eqref{Lag4:BDFk:Long:AP} for $k=r+1$ yields that
\begin{align*}
0&=\sum_{i=0}^{r+1}\delta_iA^{m+1-i}=\delta_0A^{m+1}+
  \sum_{i=1}^{r+1}\delta_iA^{m+1-i}\\
  &=\delta_0A^{m+1}+A^0 \sum_{i=1}^{r+1}\delta_i=\delta_0\l(A^{m+1}-A^0\r),
  \end{align*}
  where we used the fact $\sum\limits_{i=0}^{k}\delta_i=0$ by plugging $\zeta=1$ in \eqref{BDF:coe}.
  Thus we complete the proof by noticing $\delta_0>0$ and induction. \end{proof}

\smallskip

These schemes can also be solved efficiently  using Newton's iteration method for long-time evolution. Although proving a perimeter-decreasing property for \eqref{LagBDFkAP} seems challenging, the numerical behavior of \eqref{LagBDFkAP} and \eqref{LagBDFk} does not significantly differ (cf. Section 5).

\subsection{Modification for SP-type schemes by AP-type schemes}

As previously discussed in Remark \ref{Rem:matrix} and inspired by recent developments in the modified Lagrange multiplier approach for the phase field model \cite{Cheng2024}, for the SP-type schemes proposed in Section \ref{sec:SP}, we use a threshold $\gamma\ll 1$ to determine if equilibrium has been reached. Taking the SP-Euler scheme \eqref{Lageuler} as an example, inspired by \eqref{Lag3} or \eqref{Lag3:E}, if
\[\delta L^{m}:=\frac{L^{m+1}-L^m}{\tau}>\gamma,\]
we continue the original SP-Euler scheme \eqref{Lageuler} solved by Newton's iteration method; otherwise, we set $\lambda^{m+1}=0$ and omit the third equation \eqref{Lag3:E}. The resulting scheme, given by \eqref{Lag1:E}-\eqref{Lag2:E} and \eqref{Lag4:E}, belong to the AP-type schemes presented in Subsection \ref{aps}, which preserve area and facilitate  long-term evolution.

The complete modified algorithm is detailed in Algorithm \ref{Euler algorithm}. Numerical results (cf. Subsection \ref{sec:evolve}) demonstrate that equilibrium can be effectively achieved using Algorithm \ref{Euler algorithm}.  Furthermore, it is observed that the area remains constant and the perimeter decreases throughout the evolution. Similar modifications can be readily applied to the SP-CN scheme \eqref{LagCN} and the SP-BDF$2$ scheme \eqref{LagBDFk}.
In the following simulations, we consistently set the modification threshold to $\gamma=50\tau$.

\begin{algorithm}
\textbf{Complete algorithm for SP-Euler scheme  \eqref{Lageuler} with modifications}
	\label{Euler algorithm}
\begin{algorithmic}[1]
 \REQUIRE An initial curve $\Gamma(0)$ approximated by a polygon $\Gamma^0$ with $N$ vertices, described by $\bX^0\in [V^h]^2$ , time step $\tau$, terminate time $T$ satisfying $T/\tau\in\mathbb{N}$, a tolerance $\mathrm{tol}$ of Newton's iteration  method and a small threshold $\gamma$. The initial curvature $\kappa^0\in V^h$ is computed by solving \eqref{Lag2:semi} via a least square method \cite{BGN07A,Jiang23}, and set $\lambda^0=0$, $\eta^0=0$.
 \WHILE{$m < T/\tau$,}
 \STATE Compute $\bX^{m+1}$ by using SP-Euler scheme \eqref{Lageuler}, solved by Newton's iteration method \eqref{LagNewton} with tolerance $\mathrm{tol}$, $m = m + 1$.
 \STATE Calculate $\delta L^{m}=\frac{L^{m+1}-L^m}{\tau}$.
 \STATE \textbf{if} $|\delta L^m|\le \gamma$, \textbf{then} set $\lambda^{m+1}=0$ and
  \WHILE{$m < T/\tau$,}
  \STATE  Compute $\bX^{m+1}$ by using AP-Euler scheme \eqref{LagBDFkAP} for $k=1$, $m = m + 1$ .
   \ENDWHILE
 \ENDWHILE
 \end{algorithmic}
\end{algorithm}

\smallskip

\section{Numerical results}

In this section, we conduct the convergence order tests for our proposed structure-preserving schemes. We then illustrate the structure-preserving properties by tracking the evolution of geometric quantities for various types of initial curves.
Finally, we present some numerical experiments for the PD-BDF2 scheme and the AP-BDF$k$ schemes.

\subsection{Cauchy test for the convergence order}\label{sec:EOC}

In this subsection, we present a detailed Cauchy-type convergence order test for the SP-Euler scheme \eqref{Lageuler}, the SP-CN scheme \eqref{LagCN} and the SP-BDF$2$ scheme \eqref{LagBDFk}. The Cauchy-type convergence order test has been applied to first-order temporal numerical schemes for geometric flows \cite{BGN07A,BGN07B,DDE2005} and to second-order temporal numerical schemes for phase field models \cite{Shen2012,Hu2009}.


For the convergence test, we use the ellipse initial curve parametrized by
\[
\bX_0(\rho)=(2\cos(2\pi\rho),\sin(2\pi\rho)),\quad \rho\in \mathbb{I}.
\]
We take the terminal time as $T=0.25$, which is significantly distant from equilibrium; consequently, the modification step in Algorithm \ref{Euler algorithm} will not be executed. We then choose the Cauchy-type refinement path with $\tau=0.05h$. The error $\mathcal{E}_{\mathcal{M}}$ and the convergence order are given by
\begin{equation}
\cE_{\mathcal{M}}(T,\tau_1,\tau_2):= \mathcal{M}(\Gamma^{T/\tau_1}_{h_1}, \Gamma^{T/\tau_2}_{h_2}),\quad \text{Order}=\log\Big(\frac{\cE_{\mathcal{M}}(T,\tau_1,\tau_2)}{\cE_{\mathcal{M}}
(T,\tau_2,\tau_3)} \Big)\Big/ \log\l(\frac{\tau_1}{\tau_2} \r),
\end{equation}
respectively, where $\Gamma^{T/\tau}_h=\bX^{T/\tau}_h(\mathbb{I})$, $(\tau_j, h_j)$ satisfies the relation $\tau_j=0.05h_j$, and   $\mathcal{M}$ is the manifold distance \cite{Zhao2021,Jiang23} defined as
\[
	\mathcal{M}\l(\Gamma_1,\Gamma_2\r)
	:= |(\Omega_1\setminus\Omega_2)\cup (\Omega_2\setminus\Omega_1) | =|\Omega_1 |+|\Omega_2 |-2 |\Omega_1\cap \Omega_2 |,
\]
where $\Omega_1$ and $\Omega_2$ represent the regions enclosed by $\Gamma_1$ and $\Gamma_2$, respectively, and $|\Omega|$ denotes the area of $\Omega$. Additionally, it is intriguing to explore the numerical convergence of the Lagrange multipliers $\lambda$ and $\eta$. As both $\lambda$ and $\eta$ represent approximations of zero at the continuous model, we naturally compute the error and determine the order of convergence as
\begin{align}
 \mathcal{E}^{\lambda}_{\mathrm{Abs}}(T,\tau_1):=|\lambda^{T/\tau_1}-0|,\quad \text{Order}=\log\l(\frac{\cE^{\lambda}_{\mathrm{Abs}}(T,\tau_1)}{\cE^{\lambda}_{\mathrm{Abs}}
(T,\tau_2)} \r)\Big/ \log \l(\frac{\tau_1}{\tau_2} \r),\end{align}
and similar definition applies to the other Lagrange multiplier $\eta$.

Figure \ref{Fig:EOC} showcases the corresponding numerical outcomes for the first-order SP-Euler scheme \eqref{Lageuler}, the second-order schemes \eqref{LagCN} and \eqref{LagBDFk}. As expected, the SP-Euler scheme and the Crank-Nicolson and BDF$2$ time discretization methods exhibit robust linear and quadratic convergence in terms of manifold distance,  respectively. Additionally, the Lagrange multipliers associated with the BDF$2$ method also converge quadratically.

\begin{figure}[h!]
\hspace{125pt}
\begin{minipage}{0.32\linewidth}
\centerline{\includegraphics[width=5.0in,height=1.8in]{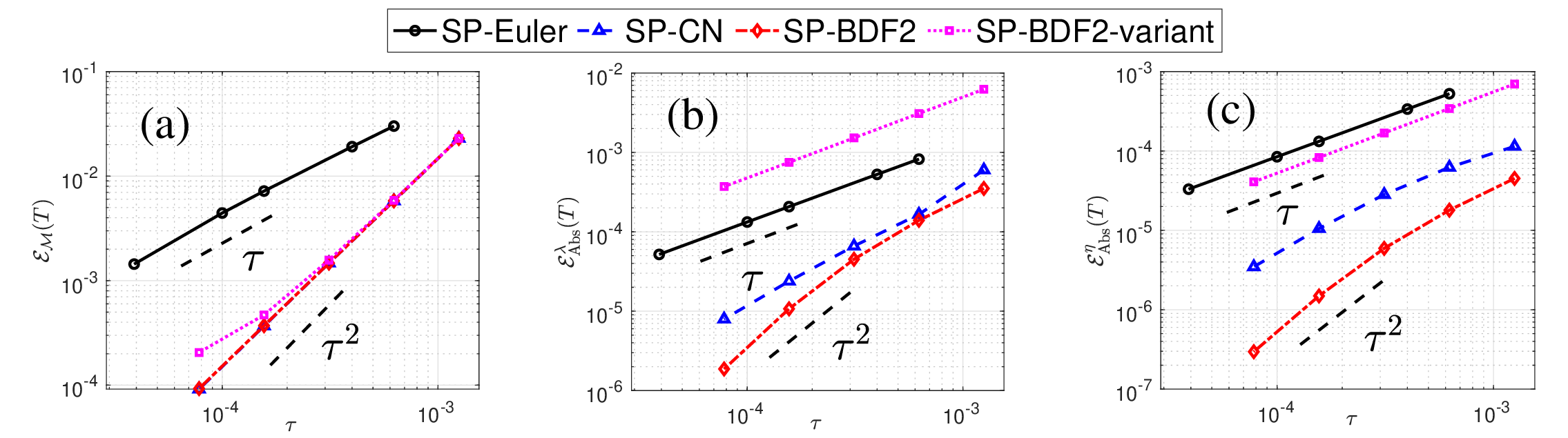}}
\end{minipage}
\caption{Log-log plot of the numerical errors with an ellipse as its initial shape at time $T=0.25$. For the SP-Euler scheme  \eqref{Lageuler}, the Cauchy path is chosen as $\tau=h^2$. For the SP-CN scheme \eqref{LagCN}, the SP-BDF2 scheme \eqref{LagBDFk} and the SP-BDF2-variant scheme with replacement \eqref{Lag3:BDFk} replaced by \eqref{Lag3:BDFk:change}, the Cauchy paths are chosen as  $\tau=0.05h$.}
\label{Fig:EOC}
\end{figure}

As discussed in Remark \ref{Change third equation}, one might wonder what happens if the third equation in the BDF2 scheme \eqref{LagBDFk} is replaced by \eqref{Lag3:BDFk:change} (denoted by SP-BDF2-variant). Figure \ref{Fig:EOC} illustrates the corresponding results, from which we observe that the convergence order rapidly degrades to first order. This highlights the necessity of consistent time discretization across all equations in \eqref{LagBDFk} at the same time level, as previously discussed in \cite{Jiang24}. By the way, the equations \eqref{Lag4:E} and \eqref{Lag4:BDFk} make no differences since both of them are equivalent to the equation $A^{m+1}=A^0$, as shown in Theorem \ref{Thm:BDF}.

\subsection{Structure-preserving properties}\label{sec:evolve}
In this subsection we conduct simulations of curve evolution and their geometric quantities for various initial curves. The geometric quantities considered in this subsections are the normalized perimeter and the relative area loss defined by
\[
L(t)/L(0)|_{t=t_m}=L^m/L^0,\qquad \Delta A(t)|_{t=t_m}=(A^m-A^0)/A^0.
\]
We also monitor the evolution of two Lagrange multipliers $$\lambda(t)|_{t=t_m}=\lambda^m,\quad \eta(t)|_{t=t_m}=\eta^m,$$
and the evolution of mesh ratio function, which is defined by
\[
\Psi(t)|_{t=t_m}=\max_j|\mathbf{h}^m_j|/\min_j|\mathbf{h}^m_j|.
\]

For the initial curves, we take the following  benchmark examples considered in \cite{BGN07A,Jiang23,Jiang24,Mikula-Sevcovic,Bao-Zhao}:
\begin{itemize}
	\item An ellipse with semi-major axis $2$ and semi-minor axis $1$;
	\item Mikula-\v{S}ev\v{c}ovi\v{c}'s curve   \cite{Mikula-Sevcovic}, parametrized by
	\[
\bX_0(\rho)=(
	 \cos(2\pi \rho),
	\sin(\cos(2\pi\rho ))+\sin(2\pi \rho)(0.7+\sin(2\pi \rho)\sin^2(6\pi \rho))),\ \rho\in \mathbb{I}.
\]
This is a curve with highly oscillatory curvature;
	\item A rectangular with length $4$ and width $1$. This is a nonsmooth initial curve.
\end{itemize}

\begin{figure}[htpb]
\centering
\includegraphics[width=4.9in,height=1.55in]{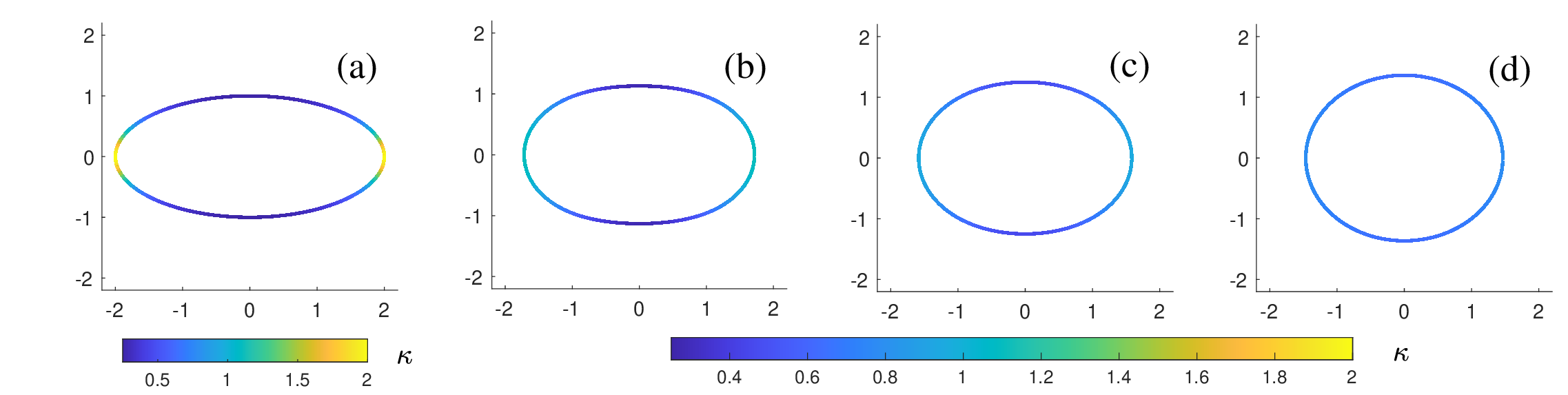}
\setlength{\abovecaptionskip}{2pt}
\caption{Snapshots of curve evolution with numerical curvature using the SP-BDF2 scheme \eqref{LagBDFk}, starting with an ellipse initial curve: (a) $t=0$, (b) $t=0.2$, (c) $t=0.4$, (d) $t=0.8$. The discretization parameters are chosen as $N=160$, $\tau=1/640$.}
\label{Fig:EVO_ell}
\end{figure}

\begin{figure}[h!]
\hspace{125pt}
\begin{minipage}{0.32\linewidth}
 \centerline{\includegraphics[width=4.8in,height=1.2in]{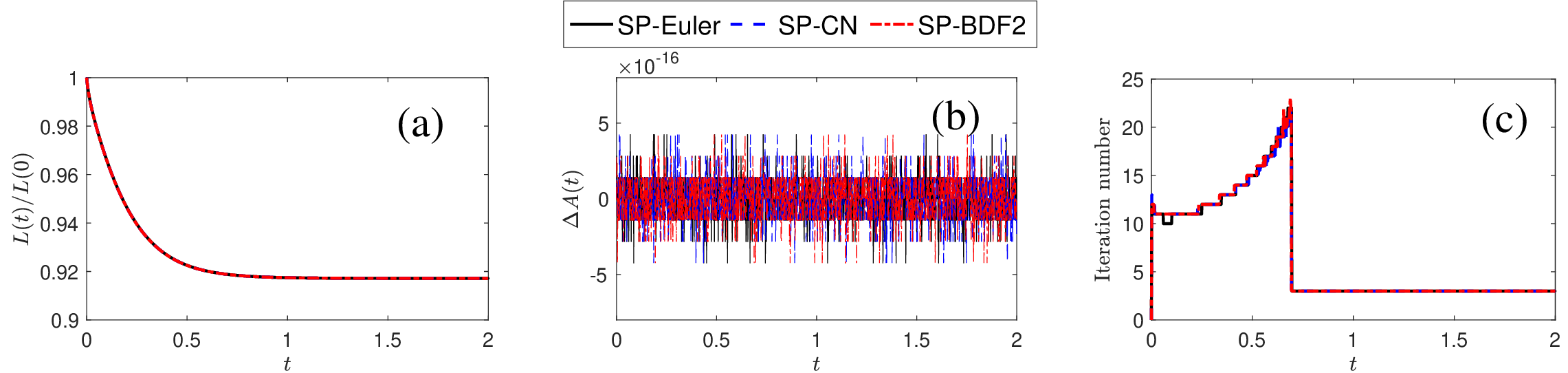}}
 \vspace{-7pt}
\centerline{\includegraphics[width=4.8in,height=1.2in]{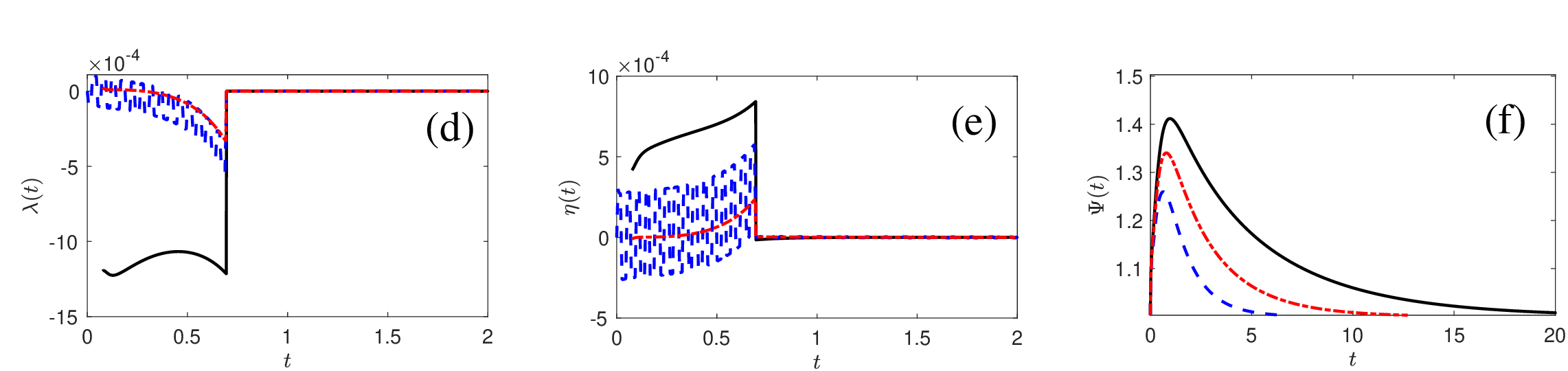}}
\end{minipage}
\caption{Evolution of geometric quantities and Lagrange multipliers as functions of time for three schemes, with the initial curve being an ellipse. (a) The normalized perimeter. (b) The relative area loss. (c) The iteration number of Newton's method. (d) The Lagrange multiplier $\lambda(t)$. (e) The Lagrange multiplier $\eta(t)$. (f) The mesh ratio function $\Psi(t)$. The discretization parameters are chosen as $N=160$, $\tau=1/640$.}
\label{Fig:GEO_ell}
\end{figure}

Figure \ref{Fig:EVO_ell} presents the evolution of an ellipse toward equilibrium using the SP-BDF2 scheme \eqref{LagBDFk}. Each snapshot is color-coded based on the approximation of curvature $\kappa^{m+1}\in V^h$. The SP-BDF2 scheme effectively simulates curve diffusion, as evidenced by the constant numerical curvature displayed in Figure \ref{Fig:EVO_ell} (d). Furthermore, Figure \ref{Fig:GEO_ell}  compares the performance of three structure-preserving schemes. As shown in Figure \ref{Fig:GEO_ell} (a) and (b), all schemes exhibit properties of perimeter reduction and area preservation (within machine precision), thereby corroborating the theoretical results presented in Theorems \ref{Thm:E}, \ref{Thm:CN} and \ref{Thm:BDF}. Figure \ref{Fig:GEO_ell} (c) reveals that the iteration number becomes larger when approaching equilibrium (approximately at $t=0.6$), as discussed in Remark \ref{Rem:matrix}. We emphasize that after achieving equilibrium, the modification procedure, as outlined in Step 5 of Algorithm \ref{Euler algorithm}, will be initiated. This procedure facilitates the transition of SP-type schemes employing two Lagrange multipliers to AP-type schemes that utilize a single Lagrange multiplier, as discussed in Subsection \ref{aps}. This transition  significantly decreases the number of iterations (cf. Figure \ref{Fig:GEO_ell} (c)) to solve the nonlinear system at each time step, thereby enhancing the efficiency of our algorithm for long-term evolution. As demonstrated in Subsection \ref{aps}, the AP-type schemes are area-preserving (cf. Figure \ref{Fig:GEO_ell} (b)). Moreover, Figure \ref{Fig:GEO_ell} (a) indicates that the perimeter remains almost constant after equilibrium, while Figure \ref{Fig:GEO_ell} (f) illustrates that the transition from SP-type to AP-type schemes substantially improves the mesh quality after equilibrium, as evidenced by their asymptotic equidistribution property.
Furthermore, Figure \ref{Fig:GEO_ell} (d) and (e) show that both Lagrange multipliers approach zero, suggesting that our formulation closely approximates the original BGN formulation.

\begin{figure}[htpb]
\centering
\includegraphics[width=4.9in,height=1.55in]{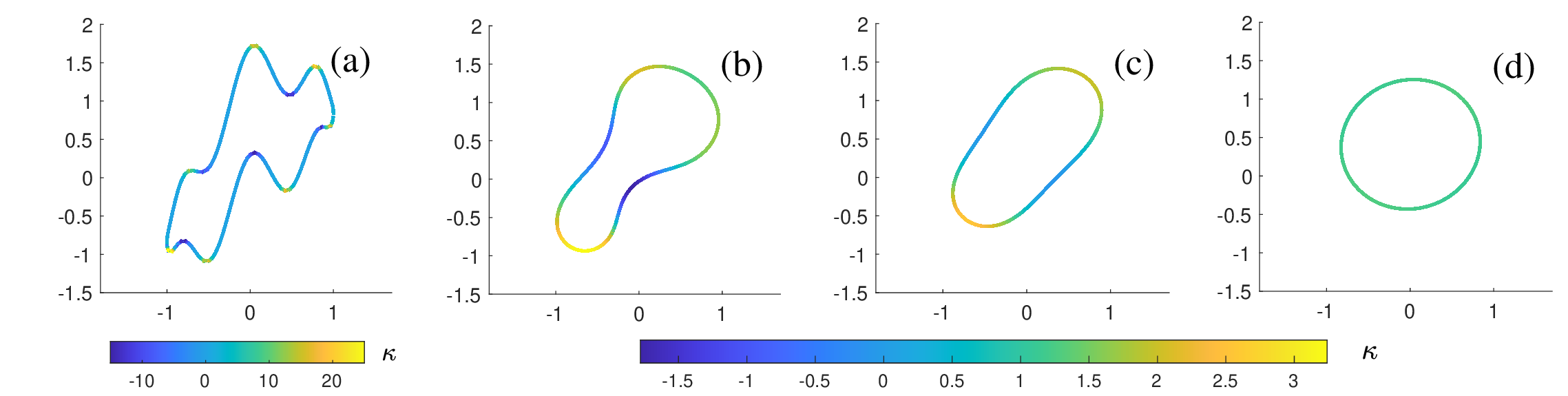}
\setlength{\abovecaptionskip}{2pt}
\caption{Snapshots of the curve evolution with its numerical curvature by the SP-BDF2 scheme \eqref{Lag1:BDFk}-\eqref{Lag4:BDFk}, starting with Mikula-\v{S}ev\v{c}ovi\v{c}'s curve. (a) $t=0$, (b) $t=0.005$, (c) $t=0.03$, (d) $t=0.15$. The parameters are chosen as $N=160$, $\tau=1/6400$.}
\label{Fig:EVO_Mikula}
\end{figure}

\begin{figure}[h!]
\hspace{125pt}
\begin{minipage}{0.32\linewidth}
 \centerline{\includegraphics[width=4.8in,height=1.2in]{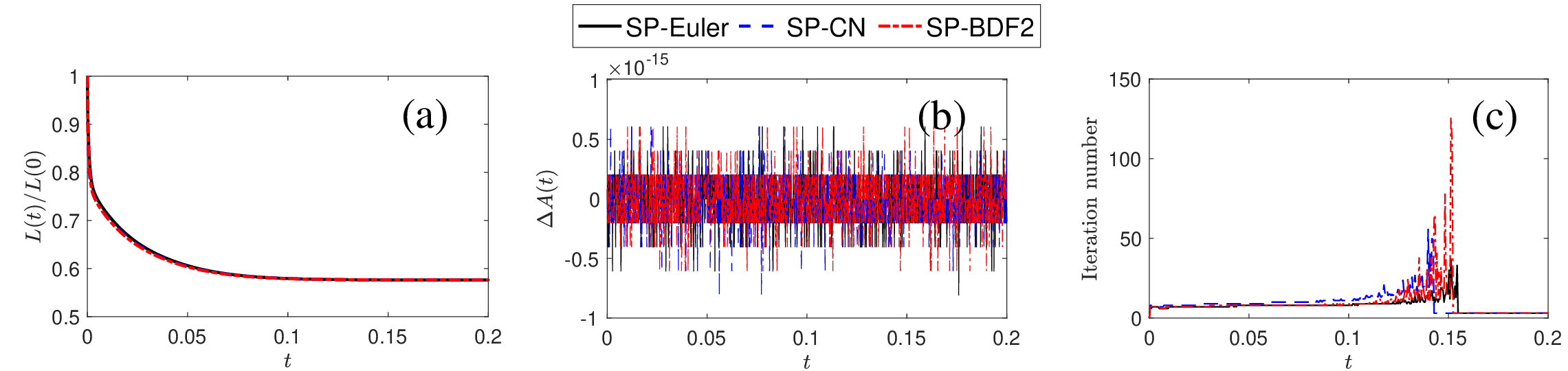}}
 \vspace{-7pt}
 \hspace{-1pt}\centerline{\includegraphics[width=4.8in,height=1.2in]{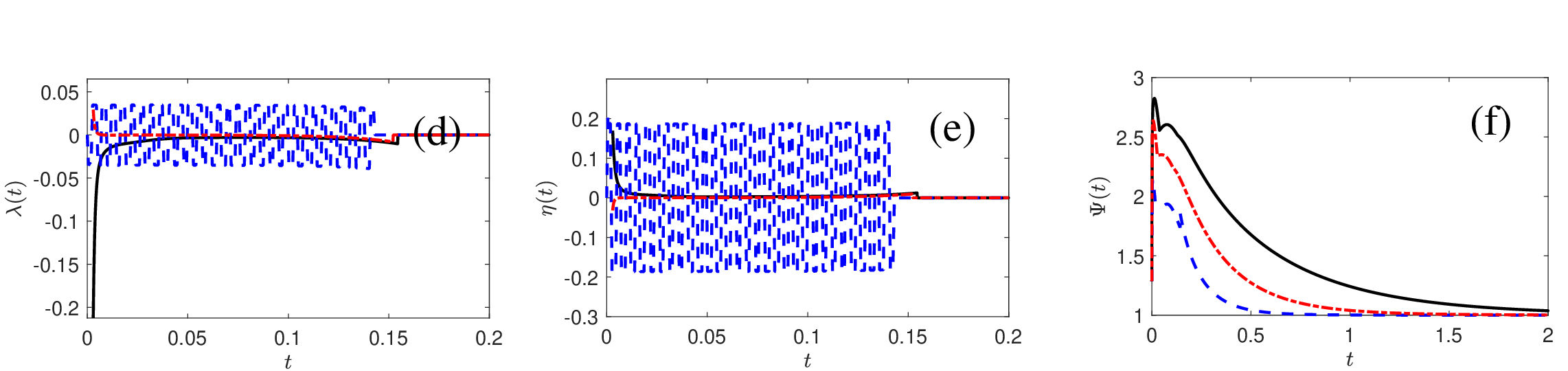}}
\end{minipage}
\caption{Evolution of geometric quantities and Lagrange multipliers as functions of time for three schemes, starting with Mikula-\v{S}ev\v{c}ovi\v{c}'s curve. (a) The normalized perimeter. (b) The relative area loss. (c) The iteration number of Newton's method. (d) The Lagrange multiplier $\lambda(t)$. (e) The Lagrange multiplier $\eta(t)$. (f) The mesh ratio function $\Psi(t)$. The parameters are chosen as $N=160$, $\tau=1/6400$.}
\label{Fig:GEO_Mikula}
\end{figure}

\vspace{0pt}

\begin{figure}[htpb]
\centering
\includegraphics[width=4.9in,height=1.55in]{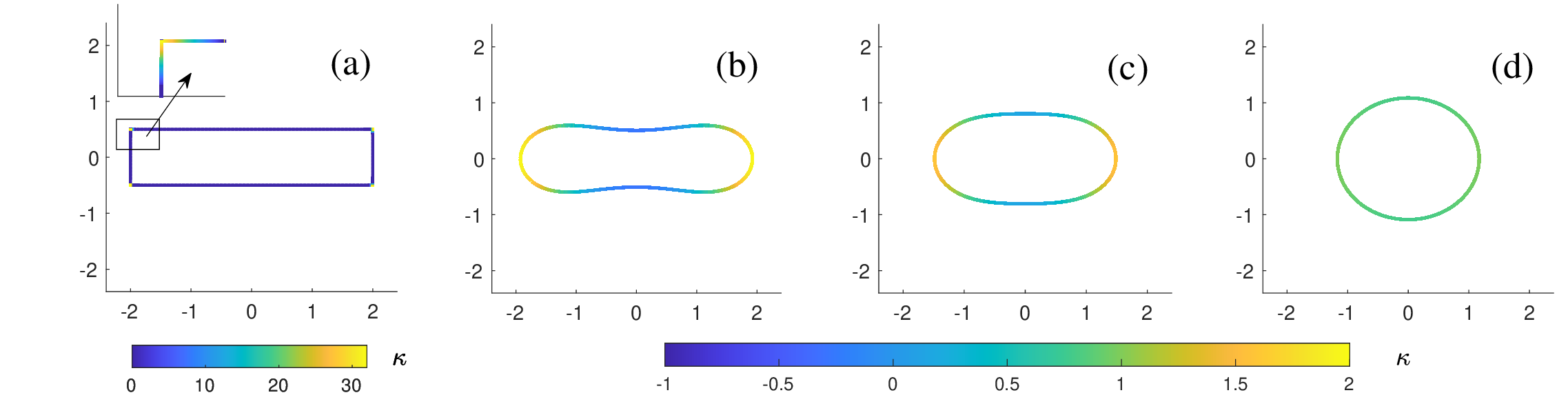}
\setlength{\abovecaptionskip}{2pt}
\caption{Snapshots of the curve evolution with its numerical curvature by the SP-BDF2 scheme \eqref{Lag1:BDFk}-\eqref{Lag4:BDFk}, starting with a rectangular initial curve. (a) $t=0$, (b) $t=0.04$, (c) $t=0.2$, (d) $t=0.5$. The parameters are chosen as $N=160$, $\tau=1/6400$.}
\label{Fig:EVO_rec}
\end{figure}

\begin{figure}[h!]
\hspace{125pt}
\begin{minipage}{0.32\linewidth}
 \centerline{\includegraphics[width=4.8in,height=1.2in]{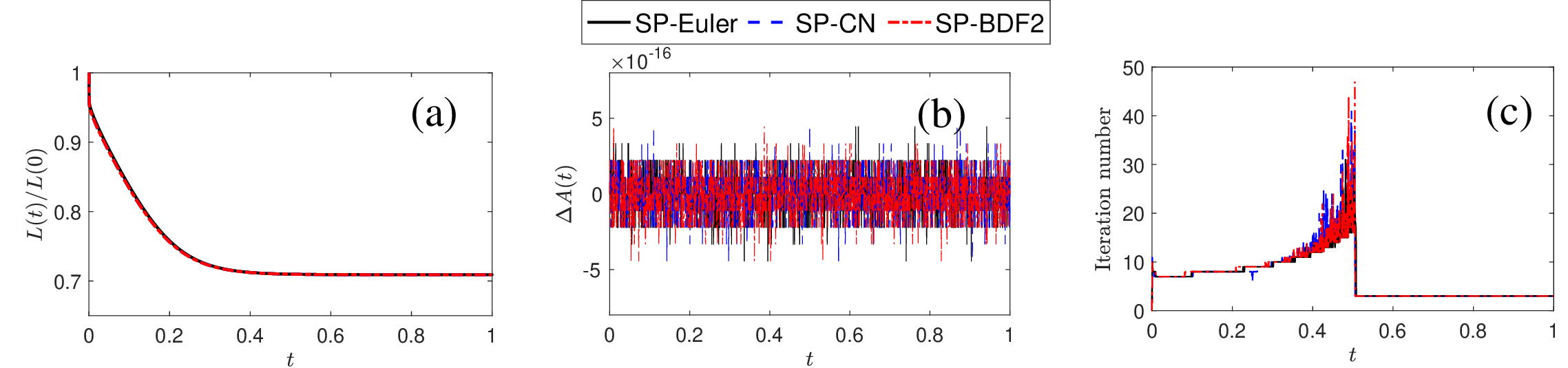}}
 \vspace{-7pt}
 \centerline{\includegraphics[width=4.8in,height=1.2in]{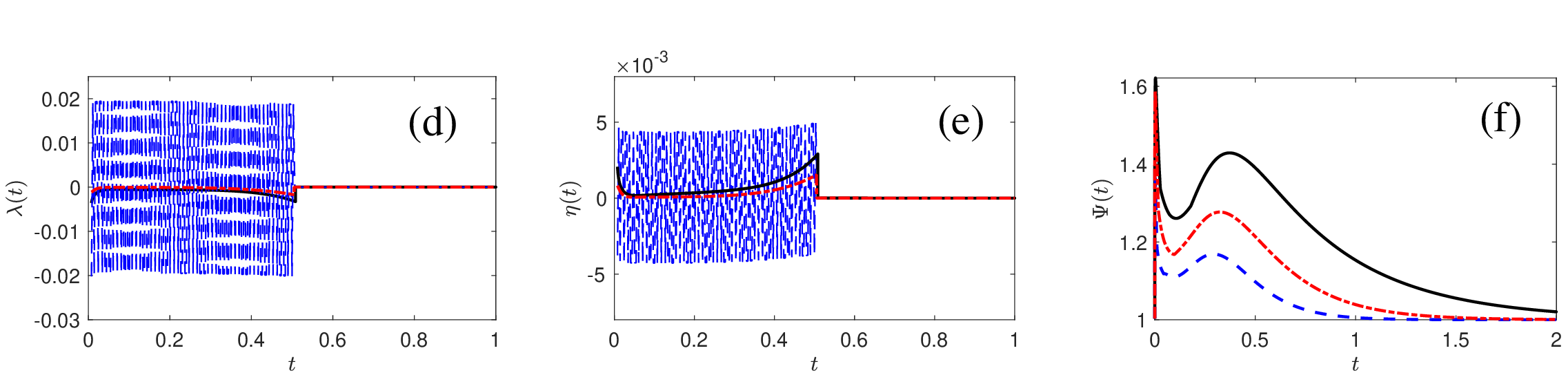}}
\end{minipage}
\caption{Evolution of geometric quantities and Lagrange multipliers as functions of time for three schemes, starting with a rectangular initial curve. (a) The normalized perimeter. (b) The relative area loss. (c) The iteration number of Newton's method. (d) The Lagrange multiplier $\lambda(t)$. (e) The Lagrange multiplier $\eta(t)$. (f) The mesh ratio function $\Psi(t)$. The parameters are chosen as $N=160$, $\tau=1/6400$.}
\label{Fig:GEO_rec}
\end{figure}

We then apply our methods to more general initial curves. Figures \ref{Fig:EVO_Mikula} and \ref{Fig:EVO_rec} show the evolution of Mikula-\v{S}ev\v{c}ovi\v{c}'s curve and a rectangle using the SP-BDF2 scheme \eqref{LagBDFk}, respectively. It is clear that both curves tend toward a circular equilibrium, with the numerical curvature converging to a constant value (as evidenced by Figure \ref{Fig:EVO_Mikula}(d) and Figure \ref{Fig:EVO_rec}(d)). Furthermore, Figure \ref{Fig:GEO_Mikula} and Figure \ref{Fig:GEO_rec} demonstrate that our algorithms effectively preserve the perimeter-decreasing and area-preserving properties and exhibit asymptotic equidistribution property when the AP-type schemes commence after equilibrium (see also Subsection \ref{single}).

\subsection{Long-term evolution for the formulation with a single Lagrange multiplier}
\label{single}
In this subsection, we conduct numerical experiments for the schemes presented in Section \ref{sec:one:Lag} that preserve only one specific geometric structure. The convergence results of the PD-BDF2 scheme \eqref{LagBDFkPD} and the AP-BDF$k$ ($k=2,3$) scheme \eqref{LagBDFkAP} are presented in Tables \ref{Tab:LD} and \ref{Tab:AP}, respectively. For the AP-BDF$k$ scheme with $k=3$, we choose the Cauchy-type refinement path $\tau=0.05h^{2/3}$.  Similar to the structure-preserving schemes with two Lagrange multipliers, one can observe the desired temporal accuracy for the numerical solution $\bX^m$ under the manifold distance.

\begin{table}[h!]\label{Tab:LD}
\centering
\renewcommand{\arraystretch}{1.4}
\def\temptablewidth{0.6\textwidth}
\vspace{-2pt}
\caption{Accuracy test for the PD-BDF2 scheme \eqref{LagBDFkPD} with ellipse as initial. The terminal time is chosen as $T=0.25$, and the Cauchy-type refinement path is chosen as $\tau=0.05h$.}
{\rule{\temptablewidth}{1pt}}
\begin{tabular*}{\temptablewidth}{@{\extracolsep{\fill}}ccc}
 $\tau$
 & $\mathcal{E}_{\mathcal{M}}(T)$&  Order  \\  \hline
1/200  & 2.28E-2  &  ---  \\   \hline
1/400  & 5.75E-3  & 1.99   \\   \hline
1/3200  & 1.45E-3  &  1.98   \\  \hline
1/6400   & 3.65E-4  &  1.99  \\   \hline
1/12800   & 9.11E-5 &  2.00 \\
 \end{tabular*}
{\rule{\temptablewidth}{1pt}}
\end{table}

\begin{table}[h!]\label{Tab:AP}
\centering
\renewcommand{\arraystretch}{1.4}
\def\temptablewidth{0.98\textwidth}
\vspace{-2pt}
\caption{Convergence order test for the AP-BDF$k$ scheme ($k=2,3$) \eqref{LagBDFkAP} with an ellipse initial shape. The terminal time is chosen as $T=0.25$, and the Cauchy-type refinement path is chosen as $\tau=0.05h$ for $k=2$ and $\tau=0.05h^{2/3}$ for $k=3$.}
{\rule{\temptablewidth}{1pt}}
\begin{tabular*}{\temptablewidth}{cccccc}
$\tau$
 &  $\mathcal{E}_{\mathcal{M}}$\, \text{for AP-BDF2 }  & Order& $\tau$&  $\mathcal{E}_{\mathcal{M}}$\, \text{for AP-BDF3 } &  Order   \\ \hline
1/800  & 2.29E-2  &  --- &  1/500 & 2.21E-3  &  ---  \\   \hline
1/1600  & 5.81E-3 &  1.98 &  1/720  & 6.94E-4  &  3.17   \\   \hline
1/3200  & 1.47E-3 &  1.98 &  1/1280 & 2.62E-4  &  3.17   \\  \hline
1/6400   & 3.71E-4  &   1.99  & 1/1620  & 1.13E-4  &  3.16  \\   \hline
1/12800   & 9.30E-5   & 2.00 & 1/2000  & 5.36E-5  & 3.15 \\
 \end{tabular*}
{\rule{\temptablewidth}{1pt}}
\end{table}

We conclude this subsection by presenting the evolution of geometric quantities for Mikula-\v{S}ev\v{c}ovi\v{c}'s curve. As previously mentioned, dynamic evolution is not included here due to its similarity with Figure \ref{Fig:EVO_Mikula}.

Figure \ref{Fig:GEO_one_Lag} illustrates the evolution of geometric quantities for the PD-BDF2 scheme \eqref{LagBDFkPD}, the AP-BDF2 and AP-BDF3 schemes \eqref{LagBDFkAP}. We can draw the following conclusions: (i) The PD-BDF2 scheme is perimeter-decreasing and the AP-BDF2 and AP-BDF3 schemes conserve the area, aligning with Theorem \ref{Thm:BDF:PD} and Theorem \ref{Thm:BDF:AP}, respectively; (ii) All the  schemes exhibit a robust long-term evolution, with the iteration number remaining low in comparison to the structure-preserving schemes with two Lagrange multipliers presented in Section 3; (iii) All schemes possess the long-time equidistribution property of mesh points, as evidenced by $\Psi(t)\rightarrow 1$ as $t\rightarrow \infty$. This is attributed to the alignment of the semi-discrete schemes with the BGN formulation as well as their inherent equidistribution property.

\begin{figure}[h!]
\hspace{125pt}
\begin{minipage}{0.32\linewidth}
 \centerline{\includegraphics[width=4.8in,height=1.2in]{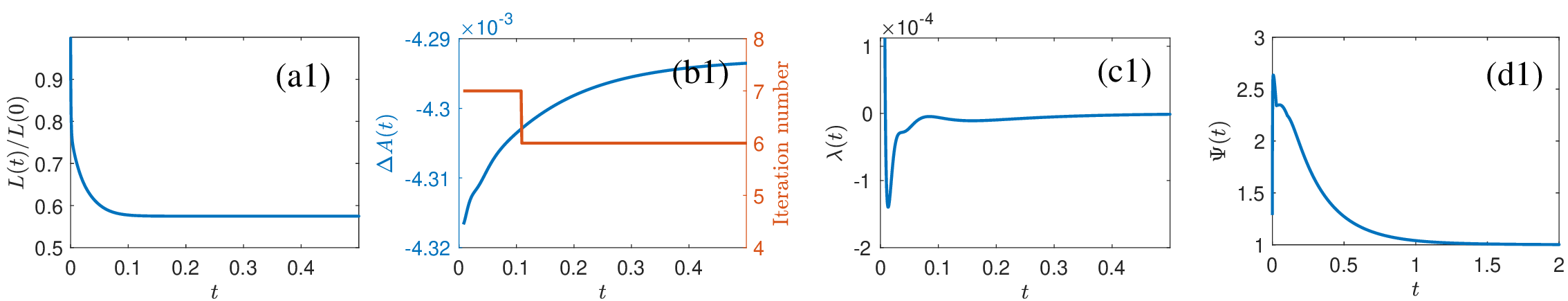}}
 \centerline{\includegraphics[width=4.8in,height=1.2in]{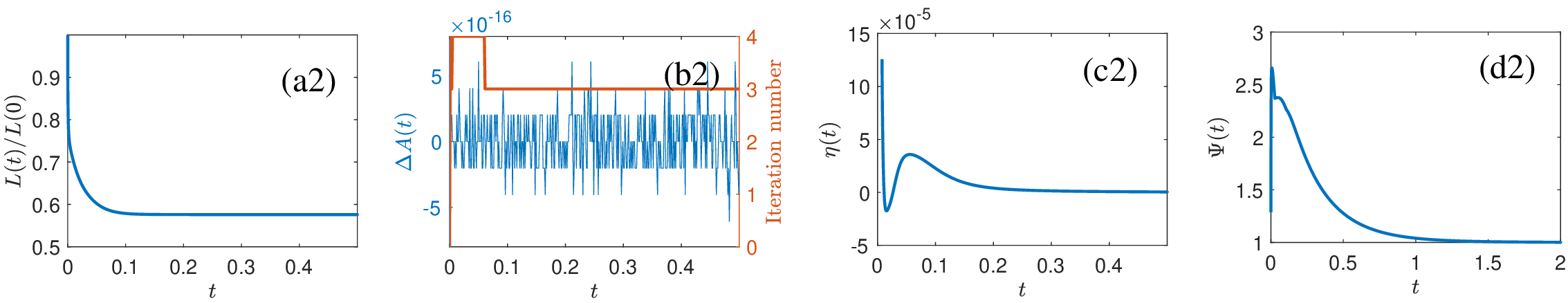}}
 \centerline{\includegraphics[width=4.8in,height=1.2in]{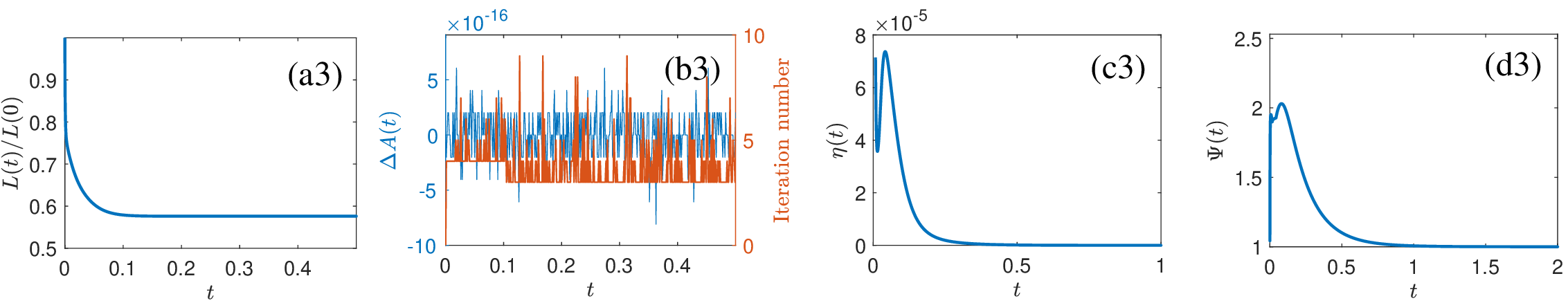}}
\end{minipage}
\caption{Evolution of geometric quantities and Lagrange multiplier for the PD-BDF2 scheme \eqref{LagPD} (first row), the AP-BDF2 scheme (second  row) and the AP-BDF3 scheme \eqref{LagAP} (third row), beginning with Mikula-\v{S}ev\v{c}ovi\v{c}'s curve.  The normalized perimeter: (a1)-(a3); The relative area loss and iteration number: (b1)-(b3); The Lagrange multiplier $\lambda(t)$: (c1) and $\eta(t)$: (c2), (c3); The mesh ratio function: (d1)-(d3). The discretization parameters are set to $N=160$, $\tau=1/6400$.}
\label{Fig:GEO_one_Lag}
\end{figure}

\section{Conclusion}

We developed several high-order structure-preserving schemes for solving curve diffusion using the Lagrange multiplier approach. Our methodology incorporates the classical BGN formulation with two supplementary scalar evolution equations that govern perimeter and area, facilitated by the introduction of two scalar Lagrange multipliers. By employing wither the Crank-Nicolson method or backward differentiation formula for time discretization and utilizing the linear finite element for spatial discretization, we proposed several fully discrete, structure-preserving schemes that demonstrate high-order accuracy in time, as measured by the manifold distance. Practically, our schemes can be efficiently solved using Newton's iteration prior to reaching equilibrium. Long-term evolution is feasible by simplifying to a single Lagrange multiplier method. Extensive numerical results have been reported to demonstrate the desired high-order accuracy and structure-preserving characteristics. Similar approaches can be extended to simulate various geometric flows, including mean curvature flow, three-dimensional surface diffusion and anisotropic geometric flow, which we intend to explore in future research.

\section*{Acknowledgments}
This work is supported by the Center of High Performance Computing, Tsinghua University and the Supercomputing Center of Wuhan University. Part of this work was done during Ganghui Zhang's visit to the Department of Mathematics at the University of Regensburg, Germany. Ganghui Zhang would like to express his sincere gratitude for their hospitality and support.


%
%

\end{document}